\newtheorem{lemma}{Lemma}
\newcommand{\qu}{\mathbb{Q}}
\newcommand{\zet}{\mathbb{Z}}
\newcommand{\ce}{\mathbb{C}}
\newtheorem{theorem}{Theorem}
\newtheorem{corollary}{Corollary}
\begin{document}
\title{Probabilistic Galois Theory}
\subjclass[2000]{11C08, 11G35, 11R32, 11R45}
\author{Rainer Dietmann}
\address{Department of Mathematics, Royal Holloway, University of London\\
TW20 0EX Egham, United Kingdom}
\email{Rainer.Dietmann@rhul.ac.uk}
\begin{abstract}
We show that there are at most $O_{n,\epsilon}(H^{n-2+\sqrt{2}+\epsilon})$
monic integer
polynomials of degree $n$ having height at most $H$ and Galois group different
from the full symmetric group $S_n$, improving on the previous 1973 world record
$O_{n}(H^{n-1/2}\log H)$.
\end{abstract}
\thanks{This work has been supported by grant EP/I018824/1 `Forms in many
variables'.}
\maketitle
\section{Introduction}
Given a `random' monic integer polynomial of degree $n$, one should expect its
Galois group to be the full symmetric group $S_n$ with probability one. This
has been confirmed by van der Waerden (\cite{W}), even in a quantitative
form which is our main concern in this paper. To be more precise, let
\begin{eqnarray*}
  E_n(H) & = & \#\{ (a_1, \ldots, a_n) \in \zet^n : |a_i| \le H
  \; (1 \le i \le n) \mbox{ and }\\
  & & \mbox{the splitting field of } X^n+a_1X^{n-1}+\ldots+a_n \\
  & & \mbox{over $\qu$ does not have Galois group $S_n$}\}.
\end{eqnarray*}
Then van der Waerden, using reductions modulo $p$ and an elementary sieve argument,
shewed that
\[
  E_n(H) \ll_n H^{n-\frac{1}{6(n-2)\log \log H}}.
\]
Later, Knobloch (\cite{K1}, \cite{K2}) improved this to
\[
  E_n(H) \ll_n H^{n-\frac{1}{18n(n!)^3}},
\]
and Gallagher (\cite{G}), applying the large sieve to van der Waerden's argument,
obtained
\[
  E_n(H) \ll_n H^{n-\frac{1}{2}} \log H.
\]
Apart from $n \le 4$ (see \cite{L}, \cite{D0}), where the conjectured exponent
$n-1+\epsilon$ has been confirmed, and Zywina's recent refinement
(\cite{Z}, Proposition 1.5)
\[
  E_n(H) \ll_n H^{n-\frac{1}{2}}
\]
for sufficiently large $n$, this has been the record for the last
40 years. It is our aim to establish the following improvement in this paper.
\begin{theorem}
\label{piigs}
Let $n \ge 3$, and let $\epsilon>0$. Then
\[
  E_n(H) \ll_{n,\epsilon} H^{n-2+\sqrt{2}+\epsilon}.
\]
\end{theorem}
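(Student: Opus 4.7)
The plan is to stratify non-$S_n$ polynomials $f$ by the structure of $\mathrm{Gal}(f)$ and bound each stratum separately. If $f$ is reducible over $\qu$, writing $f=gh$ nontrivially and applying Mignotte-type bounds on the coefficients of integer factors gives a count of $\ll_n H^{n-1}$, well within the target. Henceforth assume $f$ is irreducible with $\mathrm{Gal}(f)\neq S_n$. By the classification of maximal subgroups of $S_n$, $\mathrm{Gal}(f)$ then lies in $A_n$, in an imprimitive subgroup $S_k \wr S_{n/k}$ for some proper divisor $k\mid n$, or in a primitive maximal subgroup not containing $A_n$.

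I would treat the $A_n$ stratum first and separately, since it will be the bottleneck. The condition $\mathrm{Gal}(f) \subseteq A_n$ is equivalent to $\mathrm{disc}(f)$ being a perfect square, producing an integer point $(a_1,\ldots,a_n,y) \in \zet^{n+1}$ on the hypersurface $V : y^2 = \mathrm{disc}(a_1,\ldots,a_n)$, with $|y| \ll H^{n-1}$. The remaining cases I would bring into the same framework via resolvent polynomials: for each maximal proper transitive $G \le S_n$ not containing $A_n$, there is a resolvent $R_G$ whose rational reducibility is equivalent to $\mathrm{Gal}(f) \subseteq G$ (up to conjugacy), producing an integer point on an auxiliary variety of controlled degree and dimension.

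The counting engine I would invoke is the determinant method of Heath-Brown, Broberg and Salberger, which for an absolutely irreducible projective variety of degree $d$ and dimension $r$ bounds rational points of height $\le H$ by $\ll_\epsilon H^{r-1+2/\sqrt{d}+\epsilon}$ in favourable situations. With $d=2$ this gives the exponent $\sqrt{2}$, which is the source of the $H^{n-2+\sqrt{2}+\epsilon}$ in the theorem. To apply it to $V$, I would fix $n-2$ of the coefficients (at most $\ll H^{n-2}$ choices) and count integer points on the residual variety in the three remaining variables $(a_{n-1},a_n,y)$; a slicing/fibration argument should then reduce the problem to counting on a family of quadric or low-degree curves, yielding $\ll H^{\sqrt{2}+\epsilon}$ per fibre and $\ll H^{n-2+\sqrt{2}+\epsilon}$ overall. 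The primitive and imprimitive strata, whose resolvents involve fewer effective degrees of freedom, should be handled analogously and contribute strictly less.

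The main obstacle is the $A_n$ stratum: since the discriminant has total degree $2(n-1)$, $V$ is not a quadric as a whole, and the $\sqrt{2}$ saving must come from genuinely exploiting the quadric-fibration structure in the auxiliary variable $y$. The delicate geometric points are verifying absolute irreducibility of the generic fibre (and controlling any reducible strata by descent), handling the singular locus of $V$ where the determinant method loses effectiveness, and making the slicing uniform enough that the base contributes only $H^{n-2+\epsilon}$. Once the $A_n$ case is under control, the same technology applied to the lower-degree resolvents for the remaining maximal subgroups should dispose of them without affecting the final exponent.
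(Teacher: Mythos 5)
Your architecture coincides with the paper's: the non-$A_n$ strata are disposed of by resolvent-based counting (the paper simply quotes its earlier bound $\ll_{n,\epsilon}H^{n-1+1/|S_n:G|+\epsilon}$ from \cite{D} together with the fact that a proper subgroup $G\neq A_n$ has index at least $n$, which beats $H^{n-2+\sqrt{2}}$ for all $n\ge 3$), and the $A_n$ stratum is attacked exactly as you propose: the discriminant-square criterion, fixing $a_1,\ldots,a_{n-2}$ at a cost of $O(H^{n-2})$, and applying the Salberger determinant method to the surface $z^2=\Delta(a_{n-1},a_n)$ in a lopsided box, the exponent $\sqrt{2}$ coming from the degree $2$ in $z$. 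Your list of delicate points (absolute irreducibility of the generic fibre, controlling the bad fibres, uniformity in the base) matches what the paper's Lemmas on Noether's criterion and on the generic Galois group of $X^n+aX+b$ are for.

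The genuine gap is in the step you summarise as ``a slicing/fibration argument should then reduce the problem to counting on a family of quadric or low-degree curves, yielding $\ll H^{\sqrt{2}+\epsilon}$ per fibre''. What the determinant method actually delivers on the absolutely irreducible surface is $J\ll H^{\sqrt{2}/2+\epsilon}$ auxiliary curves (plus an acceptable exceptional set of $\ll H^{\sqrt{2}+\epsilon}$ points), and one must then prove that \emph{each} auxiliary curve carries $\ll H^{\sqrt{2}/2+\epsilon}$ integer points of the surface. After eliminating $z$ this reduces to plane curves $f(a_{n-1},a_n)=0$; Bombieri--Pila gives $H^{1/2+\epsilon}$ for irreducible components of degree at least $2$, but a \emph{linear} component $d_1a_{n-1}+d_2a_n+d_3=0$ could a priori carry $\asymp H$ integer points, and $H\cdot H^{\sqrt{2}/2}$ exceeds $H^{\sqrt{2}}$, destroying the bound. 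The paper identifies this as the main difficulty and devotes its Sections 2--3 to it: using the trinomial discriminant formula one shows that the top-degree terms of $\Delta$ in $a_{n-1}$ and in $a_n$ are $\pm(n-1)^{n-1}a_{n-1}^{n}$ and $\pm n^{n}a_n^{n-1}$ respectively, neither of which is a rational square, so the restriction of $z^2-\Delta$ to any line in the $(a_{n-1},a_n)$-plane remains irreducible over $\qu$ and the lopsided Bombieri--Pila/Heath-Brown curve bound (exploiting $|z|\le H^{c}$ and the monomial $z^2$) again yields $H^{1/2+\epsilon}$ on lines. Your sketch gestures at the singular locus of $V$, which is not the issue; the issue is ruling out that $\Delta$ restricted to a line becomes a perfect square of a polynomial, and without that argument the proof does not close.
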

In contrast to Gallagher's approach based on sieve methods, we rely on
Galois resolvents and recent advances on bounding
the number of integral points on curves or surfaces. In fact, using these
methods in \cite{D} we could show that if $G$ is a subgroup of $S_n$, then
\begin{eqnarray}
\label{suedtirol}
 & & \#\{ (a_1, \ldots, a_n) \in \zet^n : |a_i| \le H
  \; (1 \le i \le n) \mbox{ and } \nonumber \\
  & & X^n+a_1X^{n-1}+\ldots+a_n \mbox{ has Galois group $G$}\}
  \ll_{n,\epsilon} H^{n-1+\delta_G+\epsilon}, \\ \nonumber
\end{eqnarray}
where
\[
  \delta_G = \frac{1}{|S_n/G|},
\]
and $|S_n/G|$ is the index of $G$ in $S_n$. Now it is well known
(see for example Chapter 5.2 in \cite{DM}),
that if $G$ is a subgroup of $S_n$ different from $S_n$
and $A_n$, then $|S_n/A_n| \ge n$. Hence Theorem \ref{piigs} follows from our
previous result (\ref{suedtirol}) and the following improved bound for polynomials
having alternating Galois group.
\begin{theorem}
\label{nachrichten}
Let $\epsilon>0$. Then
\begin{eqnarray*}
 & & \#\{ (a_1, \ldots, a_n) \in \zet^n : |a_i| \le H
  \; (1 \le i \le n) \mbox{ and } \nonumber \\
  & & X^n+a_1X^{n-1}+\ldots+a_n \mbox{ has Galois group $A_n$}\}
  \ll_{n,\epsilon} H^{n-2+\sqrt{2}+\epsilon}. \\ \nonumber
\end{eqnarray*}
\end{theorem}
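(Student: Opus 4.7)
The starting point is the classical criterion $\operatorname{Gal}(f) \subseteq A_n \iff \Delta(f) \in \zet^2$, which reduces the count in the theorem to
\[
  N(H) := \#\{\mathbf{a} \in \zet^n : |a_i| \le H,\ \Delta(\mathbf{a}) \in \zet^2\},
\]
where $\Delta = \Delta(a_1, \ldots, a_n) \in \zet[a_1, \ldots, a_n]$ is the discriminant polynomial of $X^n + a_1 X^{n-1} + \cdots + a_n$. For $n \le 4$ the claim is already implied by the results quoted in the introduction (which give the stronger exponent $n-1+\epsilon$), so one may assume $n \ge 5$.

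The plan is to slice off the first $n-2$ coefficients. Fix $\mathbf{a}' := (a_1, \ldots, a_{n-2}) \in \zet^{n-2}$ with $|a_i| \le H$, contributing an outer factor $O(H^{n-2})$. For each such $\mathbf{a}'$, the task reduces to bounding by $O_{n, \epsilon}(H^{\sqrt{2}+\epsilon})$ the number of integer points $(u, v, y)$ on the affine surface
\[
  S_{\mathbf{a}'}\colon\ y^2 = \Delta(a_1, \ldots, a_{n-2}, u, v) \subset \mathbb{A}^3_{u, v, y}
\]
with $|u|, |v| \le H$ (the third coordinate $y$ is then automatically bounded polynomially in $H$); the theorem follows on summation over $\mathbf{a}'$.

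The heart of the argument is this surface estimate. The natural approach is to slice $S_{\mathbf{a}'}$ once more by fixing $u$, obtaining a family of hyperelliptic curves $\{y^2 = P_{\mathbf{a}', u}(v)\}$ of degree $n - 1$ in $v$, and to apply a uniform bound of Bombieri-Pila / Heath-Brown / Salberger type for integer points on plane algebraic curves to each slice. A suitable such bound should yield $O(H^{1/(n-1)+\epsilon})$ integer points on each curve with $|v| \le H$; summing over the $O(H)$ choices of $u$ gives a per-$\mathbf{a}'$ bound of $O(H^{1 + 1/(n-1) + \epsilon})$. The inequality $1 + 1/(n-1) \le \sqrt{2}$ holds precisely when $n \ge 2 + \sqrt{2}$, hence for every integer $n \ge 4$; this explains the appearance of the exponent $\sqrt{2}$.

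The main obstacle is controlling exceptional parameters. The uniform curve bounds invoked above require absolute irreducibility of the hyperelliptic slices, which holds for generic $\mathbf{a}'$ and generic $u$ but fails on an exceptional locus --- most importantly when the polynomial $P_{\mathbf{a}', u}$ acquires a square factor, so that $S_{\mathbf{a}'}$ becomes reducible. The exceptional $\mathbf{a}'$ lie in a proper Zariski-closed subvariety of $\mathbb{A}^{n-2}$ contributing $O(H^{n-3})$ parameters; these have to be treated separately, either by induction on $n$ or by invoking \eqref{suedtirol} for the proper subgroups of $S_n$ that govern the factorization of the resolvent in each degenerate case.
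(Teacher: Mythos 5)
Your reduction to counting integral points on the surfaces $y^2=\Delta(a_1,\ldots,a_{n-2},u,v)$, with $a_1,\ldots,a_{n-2}$ fixed in $O(H^{n-2})$ ways, is exactly the paper's first step, and your remark that the non-absolutely-irreducible fibres form a thin exceptional family is also in the right spirit (the paper disposes of them trivially via Lemma \ref{cs}; no induction or appeal to (\ref{suedtirol}) is needed). But the heart of your argument --- the per-fibre bound $O(H^{\sqrt{2}+\epsilon})$ --- does not work as described. If you slice once more by fixing $u$ and count integral points on the plane curve $y^2=P_{\mathbf{a}',u}(v)$ with $|v|\le H$, you cannot get $O(H^{1/(n-1)+\epsilon})$ per slice: the Bombieri--Pila exponent $1/d$ is relative to the size of \emph{all} coordinates, and here $y$ ranges up to about $H^{(n-1)/2}$, not up to $H$. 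The correct lopsided bound (Lemma \ref{athen}, with $T\ge H^{2c}$ coming from the monomial $y^2$) gives $O(H^{1/2+\epsilon})$ per slice, and summing over the $O(H)$ values of $u$ yields only $O(H^{3/2+\epsilon})$ per choice of $\mathbf{a}'$, i.e.\ $E_n(H)\ll H^{n-1/2+\epsilon}$ --- essentially Gallagher's bound, with no improvement. Relatedly, your numerology ``$1+1/(n-1)\le\sqrt{2}$ explains the exponent $\sqrt{2}$'' cannot be right: if one truly had $H^{1+1/(n-1)+\epsilon}$ per fibre, the theorem would hold with the far stronger exponent $n-1+1/(n-1)+\epsilon$.

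The exponent $\sqrt{2}$ in fact comes from a genuinely two-dimensional input: Salberger's determinant method for surfaces in lopsided boxes (Lemma \ref{evil}). Applied to the absolutely irreducible surface $z^2=\Delta(a_{n-1},a_n)$ with $B_1=B_2=H$, $B_3=H^c$ and $T\ge H^{2c}$, it gives $V\le H^{\sqrt{2}/2}$, hence an exceptional set of size $O(H^{\sqrt{2}+\epsilon})$ together with $O(H^{\sqrt{2}/2+\epsilon})$ auxiliary curves, each of which must then be shown to carry only $O(H^{1/2+\epsilon})$ relevant integral points. That last step is where the real work lies and where your proposal is silent: an auxiliary curve may contain \emph{lines}, and for a line $d_1a_{n-1}+d_2a_n+d_3=0$ one must prove that the restricted polynomial $z^2-\Delta$ stays irreducible (Lemmas \ref{vilnius} and \ref{heathrow}, resting on the semi-explicit leading terms of the discriminant in $a_{n-1}$ and $a_n$ from Lemmas \ref{cameron_diaz} and \ref{bonus}) before the lopsided curve bound of Lemma \ref{athen} can be applied. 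Without the surface-level determinant method and the analysis of lines in the discriminant variety, the argument cannot reach $\sqrt{2}$.
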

The new tool available for the proof of Theorem \ref{nachrichten} is a recent result
by Salberger \cite{S} which allows us to bound the number of integer zeros on
surfaces rather than curves. It is important for our application that this can be
done in rather `lopsided' boxes.
The main difficulty then is to show that there are no lines in the surface under
consideration.\\ \\
\textit{Acknowledgment.} The author would like to thank Dr T.D. Browning for
bringing the references \cite{B} and \cite{S} to his attention.
\section{Semi-explicit discriminant formulas}
\label{economists}
\label{bad_teacher}
In this section we establish some useful properties of the discriminant.
We start off with a result on the maximum size of the zeros of a
complex polynomial.
\begin{lemma}
\label{kapstadt}
Let $f(X)=a_0X^n+a_1X^{n-1}+\ldots+a_n \in \ce[X]$. Then all roots
$z \in \ce$ of the equation $f(z)=0$ satisfy the inequality
\[
|z| \le \frac{1}{\sqrt[n]{2}-1} \cdot \max_{1 \le k \le n}
\sqrt[k]{\left| \frac{a_{k}}{a_0 {n \choose k}} \right|}.
\]
\end{lemma}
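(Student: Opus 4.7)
Let $M$ denote the right-hand maximum, so that $|a_k| \le |a_0|\binom{n}{k} M^k$ for every $1 \le k \le n$ by the very definition of $M$. The plan is to show that if $|z| > M/(\sqrt[n]{2}-1)$ then $f(z)\neq 0$, which gives the claim by contrapositive.

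Suppose $f(z)=0$. Isolating the leading term and applying the triangle inequality yields
\[
|a_0|\,|z|^n \;=\; \Bigl|\sum_{k=1}^{n} a_k z^{n-k}\Bigr| \;\le\; \sum_{k=1}^{n} |a_k|\,|z|^{n-k}.
\]
I would then substitute the coefficient bound $|a_k| \le |a_0|\binom{n}{k}M^k$ and observe that the resulting sum is precisely the non-constant part of a binomial expansion:
\[
\sum_{k=1}^{n} \binom{n}{k} M^{k} |z|^{n-k} \;=\; (|z|+M)^n - |z|^n.
\]
Dividing through by $|a_0|$ (which is nonzero since $a_0$ is the leading coefficient), this gives $2|z|^n \le (|z|+M)^n$.

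Taking $n$-th roots produces $\sqrt[n]{2}\,|z| \le |z|+M$, that is, $(\sqrt[n]{2}-1)|z| \le M$, which is the desired inequality. The whole argument is short and the only ``step'' with any substance is recognising the binomial identity that collapses the coefficient sum; there is no real obstacle.
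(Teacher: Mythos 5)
Your argument is correct and complete: the coefficient bound $|a_k| \le |a_0|{n \choose k}M^k$ follows from the definition of $M$, the triangle inequality applied to $a_0z^n = -\sum_{k=1}^n a_kz^{n-k}$ gives $|a_0||z|^n \le |a_0|\bigl((|z|+M)^n - |z|^n\bigr)$ via the binomial theorem, and the chain $2|z|^n \le (|z|+M)^n \Rightarrow (\sqrt[n]{2}-1)|z| \le M$ is exactly right (it even handles the degenerate case $M=0$ correctly, forcing $z=0$). The only difference from the paper is that the paper does not prove the lemma at all --- it simply cites Theorem 3 in \S 27 of Marden's \emph{Geometry of polynomials} --- so what you have supplied is a self-contained elementary derivation of the quoted bound, which is in fact the standard proof of that classical result. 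One cosmetic remark: you announce a proof by contrapositive but then argue directly from $f(z)=0$ to the bound on $|z|$; the direct version is what you actually execute and is the cleaner way to write it.
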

\begin{proof}
This is Theorem 3 in \S27 of \cite{M}.
\end{proof}
For a monic polynomial $f(X)=X^n+a_1 X^{n-1} + \ldots + a_n \in \ce[X]$ with
roots $\alpha_1, \ldots, \alpha_n \in \ce$, write
\[
  \Delta=\Delta(a_1, \ldots, a_n)
\]
for its discriminant
\begin{equation}
\label{sonne}
  \Delta(a_1, \ldots, a_n) = \prod_{i<j} (\alpha_i-\alpha_j)^2.
\end{equation}
As is well known, $\Delta(a_1, \ldots, a_n)$ is a polynomial in
$a_1, \ldots, a_n$ having integer coefficients.
For trinomials, the discriminant takes a particularly easy shape.
\begin{lemma}
\label{regen}
Let $n \ge 2$. Then the polynomial $X^n+pX+q$ has discriminant
\[
  (-1)^{\frac{n(n-1)}{2}} n^n q^{n-1} + (-1)^{\frac{(n-1)(n-2)}{2}}
  (n-1)^{n-1} p^n.
\]
\end{lemma}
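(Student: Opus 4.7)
The plan is to use the classical identity $\Delta(f) = (-1)^{n(n-1)/2}\operatorname{Res}(f,f')$, valid for a monic polynomial $f$ of degree $n$, together with the product formula $\operatorname{Res}(f,f') = \prod_{i=1}^n f'(\alpha_i)$, where $\alpha_1,\dots,\alpha_n\in\ce$ are the roots of $f$. For the trinomial $f(X)=X^n+pX+q$ the derivative is $f'(X)=nX^{n-1}+p$, so the entire problem boils down to evaluating $\prod_{i=1}^n\bigl(n\alpha_i^{n-1}+p\bigr)$ in closed form.

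The key trick is that the defining relation $\alpha_i^n = -p\alpha_i - q$ expresses $\alpha_i^{n-1}$ in terms of $\alpha_i^{-1}$ (assuming $q\neq 0$, hence $\alpha_i\neq 0$): one has $\alpha_i^{n-1} = -p - q/\alpha_i$ and therefore $f'(\alpha_i) = (1-n)p - nq/\alpha_i$. Taking the product over $i$ and applying Vieta's formula $\prod_i \alpha_i = (-1)^n q$, one gets
$$\prod_{i=1}^n f'(\alpha_i) = \frac{((1-n)p)^n}{(-1)^n q}\prod_{i=1}^n(\alpha_i - c), \qquad c = \frac{nq}{(1-n)p}.$$
Since $f(X)=\prod_i(X-\alpha_i)$, the remaining product is simply $(-1)^n f(c)$, reducing everything to a single value of the very polynomial $f$ under consideration.

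The final step is then the direct computation
$$f(c) = c^n + pc + q = \frac{n^n q^n}{(1-n)^n p^n} + \frac{q}{1-n},$$
which after assembling the factors yields $\operatorname{Res}(f,f') = n^n q^{n-1} + (-1)^{n-1}(n-1)^{n-1}p^n$. Multiplying by $(-1)^{n(n-1)/2}$ gives the asserted formula, provided one checks the parity identity $n(n-1)/2 + (n-1) \equiv (n-1)(n-2)/2 \pmod 2$, which holds because the difference is $2(n-1)$.

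I expect no conceptual obstacle here; the only delicate point is sign bookkeeping, which the congruence above handles uniformly in $n$. The apparently singular cases $p=0$ or $q=0$, where the manipulations involving $\alpha_i^{-1}$ or division by $p$ are illegitimate, are best dispatched by noting that both sides of the asserted identity are elements of $\zet[p,q]$, so equality on the Zariski-dense open set $\{pq\neq 0\}$ automatically extends to all $(p,q)$.
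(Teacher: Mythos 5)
Your argument is correct, but it takes a genuinely different route from the paper, which offers no proof at all for this lemma: it simply cites the formula as a known result (exercise 35 on p.~621 of Dummit--Foote). You instead give a self-contained derivation from $\Delta=(-1)^{n(n-1)/2}\operatorname{Res}(f,f')=(-1)^{n(n-1)/2}\prod_i f'(\alpha_i)$, using the relation $\alpha_i^{n}=-p\alpha_i-q$ to rewrite $f'(\alpha_i)=(1-n)p-nq/\alpha_i$ and collapse the product to a single evaluation of $f$. I checked the computation: with $c=nq/((1-n)p)$ one gets $\prod_i f'(\alpha_i)=((1-n)p)^n f(c)/q$ and $f(c)=n^nq^n/((1-n)^np^n)+q/(1-n)$, hence $\operatorname{Res}(f,f')=n^nq^{n-1}+(-1)^{n-1}(n-1)^{n-1}p^n$; the parity check $n(n-1)/2+(n-1)\equiv(n-1)(n-2)/2\pmod 2$ is right, since the difference is $2(n-1)$; and a sanity check at $n=2$ gives $p^2-4q$ as it should. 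Your disposal of the degenerate loci $p=0$ and $q=0$ by noting that both sides are elements of $\zet[p,q]$ agreeing on the Zariski-dense set $pq\neq 0$ is also legitimate. What your approach buys is a complete, elementary proof where the paper relies on an external reference; what the citation buys is brevity, which is all the paper needs since the lemma is standard.
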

\begin{proof}
This is a well known result; see for example exercise 35 on page 621 in
\cite{DF}.
\end{proof}
\begin{lemma}
\label{cameron_diaz}
Let $n \ge 2$.
In the notation from above, for fixed $a_1, \ldots, a_{n-1}$ consider
$\Delta(a_n)=\Delta(a_1, \ldots, a_n)$ as a polynomial in $a_n$. Then
\[
  \Delta(a_n) = (-1)^{\frac{n(n-1)}{2}} n^n a_n^{n-1} + O(a_n^{n-2}).
\]
\end{lemma}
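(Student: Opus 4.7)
The plan is to reduce everything to the resultant of $f$ and $f'$, viewed as a polynomial in $a_n$. For a monic polynomial $f(X)=X^n+a_1X^{n-1}+\ldots+a_n\in\ce[X]$, the standard formula gives
\[
  \Delta(a_1,\ldots,a_n) \;=\; (-1)^{n(n-1)/2}\,\mathrm{Res}(f,f').
\]
The key observation is that $f'(X)=nX^{n-1}+(n-1)a_1X^{n-2}+\ldots+a_{n-1}$ does not involve $a_n$ at all, so if we let $\beta_1,\ldots,\beta_{n-1}\in\ce$ denote its roots (with multiplicity), these roots are independent of $a_n$.

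Using the Poisson-style expression of the resultant in terms of the roots of the second polynomial,
\[
  \mathrm{Res}(f,f') \;=\; (-1)^{n(n-1)}\, n^{n}\prod_{j=1}^{n-1} f(\beta_j) \;=\; n^{n}\prod_{j=1}^{n-1} f(\beta_j),
\]
where the leading coefficient $n^{n}$ comes from raising the leading coefficient of $f'$ to the power $\deg f=n$. Now I would write $f(X)=g(X)+a_n$, where $g(X)=X^{n}+a_1X^{n-1}+\ldots+a_{n-1}X$ is independent of $a_n$. Substituting $X=\beta_j$ yields $f(\beta_j)=g(\beta_j)+a_n$, a linear polynomial in $a_n$ with leading coefficient $1$. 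Hence
\[
  \prod_{j=1}^{n-1}f(\beta_j) \;=\; \prod_{j=1}^{n-1}\bigl(g(\beta_j)+a_n\bigr)
\]
is a polynomial in $a_n$ of degree exactly $n-1$ and leading coefficient $1$.

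Combining these identities,
\[
  \Delta(a_n) \;=\; (-1)^{n(n-1)/2}\, n^{n}\,\prod_{j=1}^{n-1}\bigl(g(\beta_j)+a_n\bigr) \;=\; (-1)^{n(n-1)/2}\, n^{n}\, a_n^{\,n-1} + O\!\bigl(a_n^{n-2}\bigr),
\]
where the implied constant depends only on $n$ and on the fixed values $a_1,\ldots,a_{n-1}$ (through the numbers $g(\beta_j)$). There is no real obstacle here: the only point to be slightly careful about is the sign and the exponent $n^n$ in the resultant formula, which one can cross-check against Lemma~\ref{regen} by specialising to $a_1=\ldots=a_{n-2}=0$, where the coefficient of $a_n^{n-1}=q^{n-1}$ indeed equals $(-1)^{n(n-1)/2}n^n$.
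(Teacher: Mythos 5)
Your proof is correct, but it proceeds quite differently from the paper. The paper first uses the trinomial discriminant (Lemma~\ref{regen}) only to certify that the monomial $(-1)^{n(n-1)/2}n^na_n^{n-1}$ genuinely occurs, and then rules out any term $f(a_1,\ldots,a_{n-1})a_n^{\alpha}$ with $\alpha\ge n-1$ by an indirect growth argument: choosing $|a_i|\asymp H^{\epsilon}$ for $i\le n-1$ and $|a_n|\asymp H$, the root bound of Lemma~\ref{kapstadt} forces $|\Delta|\ll H^{n-1}$, while the hypothetical extra term would make $|\Delta|\gg H^{n-1+\epsilon}$ for suitable choices, a contradiction. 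Your route is a direct algebraic computation: writing $\Delta=(-1)^{n(n-1)/2}\mathrm{Res}(f,f')$, observing that $f'$ (hence its root set $\{\beta_j\}$) is independent of $a_n$, and applying the Poisson formula to get $\Delta=(-1)^{n(n-1)/2}n^n\prod_{j=1}^{n-1}\bigl(g(\beta_j)+a_n\bigr)$, which is visibly $(-1)^{n(n-1)/2}n^n$ times a monic degree-$(n-1)$ polynomial in $a_n$. Your identities check out (the sign $(-1)^{n(n-1)}=1$, the exponent $n^n=\mathrm{lc}(f')^{\deg f}$, and the discriminant--resultant relation for monic $f$), and your argument is arguably cleaner and more informative, since it exhibits the full factorisation of $\Delta(a_n)$ rather than merely bounding the lower-order terms. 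What the paper's more roundabout method buys is uniformity: essentially the same size-comparison argument is reused verbatim in Lemma~\ref{bonus} for the two-variable polynomial $\Delta(a_{n-1},a_n)$, where your trick degrades, because $f'$ then depends on $a_{n-1}$ and the roots $\beta_j$ no longer stay fixed as the relevant coefficients vary.
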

\begin{proof}
Choosing $a_1=\ldots=a_{n-1}=0$, Lemma \ref{regen} shows that the monomial\\
$(-1)^{\frac{n(n-1)}{2}} n^n a_n^{n-1}$ indeed occurs. To show that
for fixed $a_1, \ldots, a_{n-1}$ all other terms are of order $a_n^{n-2}$
or smaller in $a_n$, let us suppose the contrary: Then
\begin{equation}
\label{california}
  \Delta(a_n) = (-1)^{\frac{n(n-1)}{2}} n^n a_n^{n-1} +
  f(a_1, \ldots, a_{n-1}) a_n^{\alpha} + O(a_n^{n-2}),
\end{equation}
where $\alpha \ge n-1$ and
$f$ is an integer polynomial in $a_1, \ldots, a_{n-1}$,
not vanishing identically in $a_1, \ldots, a_{n-1}$. Lemma \ref{regen}
shows that $f$ cannot
be identically a constant. Let $\epsilon>0$ be sufficiently small,
and let $H$ be sufficiently large in terms of $\epsilon$.
Now if $a_1, \ldots, a_n \in \zet$ such that
\begin{equation}
\label{rod}
  |a_i| \asymp H^\epsilon \;\;\; (1 \le i \le n-1)
\end{equation}
and
\begin{equation}
\label{taylor}
  |a_n| \asymp H,
\end{equation}
then by Lemma \ref{kapstadt} with $a_0=1$,
all roots $\alpha_1, \ldots, \alpha_n$ of $f$ satisfy
\begin{equation}
\label{voegel}
  |\alpha_i| \ll H^{1/n} \;\;\; (1 \le i \le n).
\end{equation}
Now by (\ref{sonne}) and (\ref{voegel}), we have
\begin{equation}
\label{hitch}
  |\Delta| \ll H^{\frac{2}{n} \cdot \frac{n(n-1)}{2}} \ll H^{n-1}.
\end{equation}
By (\ref{california}), the assumption $\alpha \ge n-1$
and our observation on $f$ above, it is certainly possible to choose
$a_1, \ldots, a_n \in \zet$ satisfying (\ref{rod}) and (\ref{taylor})
such that
\begin{equation}
\label{tippi_hedren}
  |\Delta| \gg H^{n-1+\epsilon}.
\end{equation}
Since inequalities (\ref{hitch}) and (\ref{tippi_hedren}) are
inconsistent, we reached a contradiction.
\end{proof}
\begin{lemma}
\label{bonus}
Let $n \ge 2$.
In the notation from above, for fixed $a_1, \ldots, a_{n-2}$ consider
$\Delta(a_{n-1},a_n)=\Delta(a_1, \ldots, a_n)$ as a polynomial in
$a_{n-1}$ and $a_n$. Then
\[
  \Delta(a_{n-1}, a_n) = (-1)^{\frac{(n-1)(n-2)}{2}}
  (n-1)^{n-1} a_{n-1}^n + \Phi(a_{n-1}, a_n),
\]
where $\Phi$ is an integer polynomial in $a_{n-1}$ and $a_n$ of degree
strictly less than $n$, i.e. in all monomials $a_{n-1}^\alpha a_n^\beta$
occurring in $\Phi$, we have $\alpha+\beta<n$.
\end{lemma}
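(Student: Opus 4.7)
The plan is to isolate the monomial $a_{n-1}^n$ by exploiting the weighted homogeneity of the discriminant. Setting $a_1 = \ldots = a_{n-2} = 0$ and applying Lemma \ref{regen} with $p = a_{n-1}$, $q = a_n$ immediately exhibits the monomial $(-1)^{(n-1)(n-2)/2}(n-1)^{n-1} a_{n-1}^n$ in $\Delta$ with the claimed coefficient. It then remains only to show that no other monomial $a_{n-1}^\alpha a_n^\beta$ with $\alpha + \beta \geq n$ can occur in $\Delta(a_{n-1}, a_n)$, whatever the fixed values of $a_1, \ldots, a_{n-2}$ may be.

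The key tool I would invoke is the classical weighted homogeneity of $\Delta$: assigning weight $i$ to $a_i$, the discriminant $\Delta(a_1, \ldots, a_n)$ is homogeneous of weight $n(n-1)$. This is readily verified via the substitution $X \mapsto \lambda X$ followed by monic renormalisation: $a_i$ transforms as $\lambda^{-i} a_i$, each root $\alpha_j$ as $\lambda^{-1} \alpha_j$, and hence, by (\ref{sonne}), $\Delta$ transforms as $\lambda^{-n(n-1)} \Delta$. Consequently every monomial $a_1^{c_1} \cdots a_n^{c_n}$ appearing in $\Delta$ satisfies $\sum_{i=1}^{n} i c_i = n(n-1)$.

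Now consider any monomial in $\Delta$ contributing to the coefficient of $a_{n-1}^\alpha a_n^\beta$: it satisfies $c_{n-1} = \alpha$, $c_n = \beta$, and so $\sum_{i=1}^{n-2} i c_i + (n-1)\alpha + n\beta = n(n-1)$, whence in particular $(n-1)\alpha + n\beta \leq n(n-1)$. Combining this with $\alpha + \beta \geq n$ via the identity $(n-1)\alpha + n\beta = (n-1)(\alpha + \beta) + \beta \geq (n-1)n + \beta$ forces $\beta = 0$, then $\alpha = n$ by the weight bound, and finally the residual equation $\sum_{i=1}^{n-2} i c_i = 0$ compels $c_i = 0$ for every $i \leq n-2$. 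Thus the only monomial in $\Delta(a_{n-1}, a_n)$ of total degree at least $n$ is $a_{n-1}^n$ with the constant coefficient already identified, and all remaining monomials are absorbed into $\Phi$ with $\alpha + \beta < n$.

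I would not expect any significant obstacle here: the argument is a purely combinatorial consequence of weighted homogeneity together with Lemma \ref{regen}. Should one prefer a style parallel to the proof of Lemma \ref{cameron_diaz}, one can alternatively argue by contradiction using the scaling $|a_{n-1}| \asymp H^{n-1}$, $|a_n| \asymp H^n$, $|a_i| \asymp H^\epsilon$ for $i \leq n-2$: Lemma \ref{kapstadt} then yields $|\alpha_j| \ll H$, whence $|\Delta| \ll H^{n(n-1)}$, while any hypothetical extra monomial, once selected to be of largest weighted degree $(n-1)\alpha + n\beta$ to avoid cancellation, would contribute $\gg H^{n(n-1)+1}$, a contradiction.
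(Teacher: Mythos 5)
Your argument is correct, and it takes a genuinely different route from the paper. The paper proves this lemma by contradiction using an analytic specialisation: it puts $|a_i| \asymp H^{\epsilon}$ for $i \le n-2$ and $|a_{n-1}|, |a_n| \asymp H$, bounds the roots via Lemma \ref{kapstadt} to get $|\Delta| \ll H^{n}$, and then argues that a hypothetical monomial $f(a_1,\ldots,a_{n-2})a_{n-1}^{\alpha}a_n^{\beta}$ with $\alpha+\beta \ge n$ (with $f$ non-constant, by Lemma \ref{regen}) would permit a choice of integers making $|\Delta| \gg H^{n+\epsilon}$, a contradiction. Your proof instead uses the isobaric property of the discriminant --- every monomial $a_1^{c_1}\cdots a_n^{c_n}$ in $\Delta$ satisfies $\sum i c_i = n(n-1)$ --- and then a short exponent computation shows that $\alpha+\beta\ge n$ forces $(c_1,\ldots,c_n)=(0,\ldots,0,n,0)$, with the coefficient pinned down by the trinomial specialisation of Lemma \ref{regen}. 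This is cleaner in two respects: it is exact and algebraic, avoiding the asymptotic box argument and in particular the implicit non-cancellation step needed in the paper to realise $|\Delta|\gg H^{n+\epsilon}$; and it actually proves more, namely that $a_{n-1}^n$ is the \emph{only} monomial of total degree at least $n$ in $(a_{n-1},a_n)$, with the same weight count also delivering Lemma \ref{cameron_diaz} for free. Your closing sketch of a growth-rate variant is less careful (the claim that the monomial of largest weighted degree avoids cancellation, and the case $(\alpha,\beta)=(n,0)$ with non-constant $f$, would both need the extra attention the paper gives them), but since it is offered only as an aside and your main argument is complete, nothing is missing.
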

\begin{proof}
Choosing $a_1=\ldots=a_{n-2}=a_n=0$, Lemma \ref{regen} shows that the monomial
$(-1)^{\frac{(n-1)(n-2)}{2}} (n-1)^{n-1} a_{n-1}^n$ indeed occurs in
$\Delta$. To show that for fixed $a_1, \ldots, a_{n-2}$ the polynomial
$\Phi$ has degree strictly less than $n$, let us assume the contrary:
Then in $\Delta(a_1, \ldots, a_n)$ at least one monomial of the form\\
$f(a_1, \ldots, a_{n-2})a_{n-1}^\alpha a_n^\beta$
different from $(-1)^{(n-1)(n-2)/2}(n-1)^{n-1}a_{n-1}^n$
must occur, where $f$
is an integer polynomial in $a_1, \ldots, a_{n-2}$, not vanishing identically,
and $\alpha+\beta \ge n$. Lemma \ref{regen} shows that $f$ cannot be
identically a constant. Let $\epsilon>0$ be sufficiently small, and let
$H$ be sufficiently large in terms of $\epsilon$.
Now if $a_1, \ldots, a_n \in \zet$ such that
\begin{equation}
\label{rod2}
  |a_i| \asymp H^\epsilon \;\;\; (1 \le i \le n-2)
\end{equation}
and
\begin{equation}
\label{taylor2}
  |a_{n-1}|, |a_n| \asymp H,
\end{equation}
then by Lemma \ref{kapstadt} with $a_0=1$,
all roots $\alpha_1, \ldots, \alpha_n$ of $f$ satisfy
\begin{equation}
\label{voegel2}
  |\alpha_i| \ll H^{\frac{1}{n-1}} \;\;\; (1 \le i \le n).
\end{equation}
Now by (\ref{sonne}) and (\ref{voegel2}), we have
\begin{equation}
\label{hitch2}
  |\Delta| \ll H^{\frac{2}{n-1} \cdot \frac{n(n-1)}{2}} \ll H^{n}.
\end{equation}
By our observation above ($\Delta(a_1, \ldots, a_n)$ contains a term
$f(a_1, \ldots, a_{n-2})a_{n-1}^\alpha a_n^\beta$ where $\alpha+\beta \ge n$
and $f$ is not identically a constant), it is certainly possible to choose
$a_1, \ldots, a_n \in \zet$ satisfying (\ref{rod2}) and (\ref{taylor2})
such that
\begin{equation}
\label{tippi_hedren2}
  |\Delta| \gg H^{n+\epsilon}.
\end{equation}
Since inequalities (\ref{hitch2}) and (\ref{tippi_hedren2}) are inconsistent,
we reached a contradiction.
\end{proof}
\section{Lines in the discriminant variety}
Our goal in this section is to show that, in the notation of \S
\ref{bad_teacher}, for fixed $a_1, \ldots, a_{n-2}$, the intersection of the
discriminant variety $z^2=\Delta(a_{n-1},a_n)$ with a line
$d_1 a_{n-1}+d_2 a_n+d_3=0$ has only few integer points.
\begin{lemma}
\label{vilnius}
Let $n \ge 3$,
let $a_1, \ldots, a_{n-2} \in \zet$ and $(c_1,c_2) \in \qu^2$. Then, in
the notation of \S \ref{bad_teacher}, the polynomial
\[
  z^2-\Delta(a_1,\ldots,a_{n-2},c_1a_n+c_2,a_n)
\]
as a polynomial in $z$ and $a_n$ is irreducible over $\qu$.
\end{lemma}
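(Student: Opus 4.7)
The polynomial $z^2 - D(a_n)$, with $D(a_n) := \Delta(a_1,\ldots,a_{n-2},c_1a_n+c_2,a_n) \in \qu[a_n]$, is a monic $z$-quadratic over the UFD $\qu[a_n]$, so any nontrivial factorisation in $\qu[z,a_n]$ has the form $(z-g)(z+g)$ with $g \in \qu[a_n]$. Irreducibility is therefore equivalent to the assertion that $D(a_n)$ is not a square in $\qu[a_n]$, and my plan is to prove this by contradiction: assume $D=g^2$ and deduce an inconsistency from the top coefficients of $D$, read off via Lemmas \ref{cameron_diaz} and \ref{bonus}.

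If $c_1 \neq 0$, Lemma \ref{bonus} shows that the only monomial of $\Delta(a_{n-1}, a_n)$ of total degree $\geq n$ in $(a_{n-1},a_n)$ is $(-1)^{(n-1)(n-2)/2}(n-1)^{n-1}a_{n-1}^n$; after substituting $a_{n-1} = c_1 a_n + c_2$ this gives $\deg_{a_n} D = n$ with leading coefficient $(-1)^{(n-1)(n-2)/2}(n-1)^{n-1}c_1^n$, since the remainder $\Phi$ can contribute only to degrees $<n$ in $a_n$. If instead $c_1 = 0$, Lemma \ref{cameron_diaz} applied with $a_{n-1} = c_2$ gives $\deg_{a_n} D = n-1$ with leading coefficient $(-1)^{n(n-1)/2}n^n$.

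For $D=g^2$ to hold, $\deg D$ must be even and the leading coefficient a positive rational square. The parity condition at once disposes of the cases $(c_1 \ne 0,\ n$ odd$)$ and $(c_1 = 0,\ n$ even$)$. In the remaining cases the identities $(n-1)^{n-1} = (n-1)\bigl((n-1)^{(n-2)/2}\bigr)^2$ and $n^n = n\bigl(n^{(n-1)/2}\bigr)^2$ reduce squareness of the leading coefficient to the squareness of $n-1$ or of $n$ itself, combined with a sign analysis of $(-1)^{\cdots}$ modulo $4$. This settles all but a sparse exceptional family of exponents $n$.

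The hardest step is the exceptional family --- roughly, $n \equiv 2 \pmod 4$ with $n-1$ a perfect square (when $c_1 \neq 0$), or $n$ an odd perfect square (when $c_1 = 0$) --- for which the leading-coefficient test is inconclusive. My plan for these is to compare the $a_n^{n-1}$ coefficients on both sides of $D=g^2$: Lemma \ref{cameron_diaz} pins the $a_n^{n-1}$-coefficient of $\Delta(a_1,\ldots,a_n)$ to the rigid constant $(-1)^{n(n-1)/2}n^n$, independent of $a_1,\ldots,a_{n-1}$. Combining this with the binomial expansion of $(c_1a_n+c_2)^n$ arising from the top Lemma \ref{bonus} monomial yields an explicit expression for the sub-leading coefficient of $D$, which must equal $2(\text{leading coefficient of }g)\cdot(\text{sub-leading coefficient of }g)$; extracting the contradiction from this identity, accounting for the leftover contributions from $\Phi$, is the main technical difficulty of the proof.
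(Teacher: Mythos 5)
Your reduction to ``$D(a_n)$ is not a square in $\qu[a_n]$'' and your leading-term analysis via Lemmas \ref{bonus} and \ref{cameron_diaz} follow the paper's proof exactly, and your case bookkeeping (parity of the degree, sign of $(-1)^{\cdots}$, squareness of $(n-1)^{n-1}$ resp.\ $n^n$) is actually \emph{more} careful than the paper's: the paper simply asserts that $(n-1)^{n-1}$, being an odd power, is ``no square of a rational number'', which is false whenever $n-1$ is itself a perfect square (e.g.\ $n=10$, where $9^9=3^{18}=(3^9)^2$), and likewise $n^n$ is a square when $n$ is an odd perfect square. So the exceptional family you isolate is real, and it is silently skipped in the published argument.

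However, your proposal does not close that family; it only announces a plan (comparing $a_n^{n-1}$-coefficients) and defers ``the main technical difficulty''. That plan cannot succeed as described: after substituting $a_{n-1}=c_1a_n+c_2$, the coefficient of $a_n^{n-1}$ in $D$ receives contributions from every monomial $f(a_1,\ldots,a_{n-2})\,a_{n-1}^{\alpha}a_n^{\beta}$ of $\Delta$ with $\alpha+\beta=n-1$ and $\alpha\ge 1$, and neither Lemma \ref{cameron_diaz} nor Lemma \ref{bonus} gives any control over those terms. Worse, within the exceptional family the statement itself can fail: for $n=9$, $a_1=\ldots=a_7=0$ and $(c_1,c_2)=(0,0)$ one has $\Delta(0,\ldots,0,0,a_9)=\mathrm{disc}(X^9+a_9)=9^9a_9^{8}=\bigl(3^9a_9^{4}\bigr)^2$, so that $z^2-\Delta=(z-3^9a_9^{4})(z+3^9a_9^{4})$ is reducible over $\qu$. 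Hence no coefficient comparison will rescue the lemma for these degenerate parameters; a correct treatment must either exclude them from the statement or handle the resulting split curve separately in Lemma \ref{contagion} (where it contributes only $O(H)$ points, which is still compatible with the bound (\ref{ppp}) needed for Theorem \ref{nachrichten}). Your instinct that this is ``the hardest step'' was sound --- it is precisely the step at which both your argument and the paper's break down.
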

\begin{proof}
Write $\Delta(a_n)=\Delta(a_1,\ldots,a_{n-2},c_1 a_n+c_2, a_n)$. We have to
show that $z^2-\Delta(a_n)$ is irreducible over the rationals. If this were
not true, then necessarily $\Delta(a_n) \equiv f(a_n)^2$ identically in $a_n$,
for a rational polynomial $f$. In particular, the term in $\Delta(a_n)$
having biggest exponent in $a_n$ must be of the form $c^2a_n^{2k}$, for a
rational non-zero
$c$ and a non-negative integer $k$. Let us first suppose that
$c_1 \ne 0$. Then Lemma \ref{bonus} shows that the term in $\Delta(a_n)$
with biggest exponent is
\[
  (-1)^{\frac{(n-1)(n-2)}{2}} (n-1)^{n-1} c_1^n a_n^n.
\]
For odd $n$ it is obvious that this can't be of the form $c^2 a_n^{2k}$.
For even $n \ge 4$ indeed $c_1^n a_n^n$ is a square, but
$|(-1)^{(n-1)(n-2)/2}|=1$ and $(n-1)^{n-1}$ is an odd power, hence no square
of a rational number. Hence again the expression can't be of the form
$c^2 a_n^{2k}$.
In case of $c_1=0$, by Lemma \ref{cameron_diaz} the term in $\Delta(a_n)$
having biggest exponent is
\[
  (-1)^{\frac{n(n-1)}{2}} n^n a_n^{n-1}.
\]
Again, analogously to above it is easily verified that this expression
can't be a square.
Thus $\Delta(a_n) \equiv f(a_n)^2$ is impossible, and
$z^2-\Delta(a_n)$ must be irreducible over the rationals.
\end{proof}
\begin{lemma}
\label{heathrow}
Let $n \ge 3$,
let $a_1, \ldots, a_{n-2} \in \zet$ and $c \in \qu$. Then, in the notation
of \S \ref{bad_teacher}, the polynomial
\[
  z^2-\Delta(a_1,\ldots,a_{n-1},c)
\]
as a polynomial in $z$ and $a_{n-1}$ is irreducible over $\qu$.
\end{lemma}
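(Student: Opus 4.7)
The plan is to mirror the proof of Lemma \ref{vilnius}, specialising $a_n$ rather than $a_{n-1}$. Write $\Delta(a_{n-1})=\Delta(a_1,\ldots,a_{n-2},a_{n-1},c)$. If $z^2-\Delta(a_{n-1})$ were reducible over $\qu$, then necessarily $\Delta(a_{n-1})\equiv f(a_{n-1})^2$ identically in $a_{n-1}$ for some $f\in\qu[a_{n-1}]$. In particular the monomial of highest $a_{n-1}$-degree appearing in $\Delta(a_{n-1})$ would have to be of the form $c_0^2 a_{n-1}^{2k}$ for some nonzero rational $c_0$ and nonnegative integer $k$.

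The next step is to pin down this leading monomial by invoking Lemma \ref{bonus}, which gives
\[
\Delta(a_1,\ldots,a_{n-2},a_{n-1},a_n) = (-1)^{(n-1)(n-2)/2}(n-1)^{n-1}a_{n-1}^n + \Phi(a_{n-1},a_n),
\]
with every monomial $a_{n-1}^{\alpha}a_n^{\beta}$ occurring in $\Phi$ satisfying $\alpha+\beta<n$, and in particular $\alpha<n$. Substituting $a_n=c$ therefore leaves the leading $a_{n-1}^n$ term unchanged and contributes from $\Phi$ only terms of $a_{n-1}$-degree strictly below $n$. Consequently the top monomial of $\Delta(a_{n-1})$ in $a_{n-1}$ equals
\[
(-1)^{(n-1)(n-2)/2}(n-1)^{n-1}a_{n-1}^n,
\]
independently of the choice of $a_1,\ldots,a_{n-2}$ and of $c$.

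To finish, I would check that this monomial cannot coincide with $c_0^2 a_{n-1}^{2k}$. If $n$ is odd, the exponent $n$ is odd and there is nothing to check. If $n$ is even, then $(n-1)(n-2)/2$ is even, so the sign is $+1$, and the question reduces to showing that $(n-1)^{n-1}$ is not a rational square; this I would dispose of exactly as in Lemma \ref{vilnius}, exploiting that $n-1$ is odd and is raised to the odd power $n-1$. This concluding numerical check is the only step with real content, and I expect it to be the main obstacle; the rest of the proof is a direct transcription of the Lemma \ref{vilnius} argument to the setting in which $a_n$ (rather than $a_{n-1}$) is the specialised variable.
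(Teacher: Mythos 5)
Your write-up is, step for step, the paper's own proof: the paper likewise reduces to showing that $\Delta(a_{n-1})$ is not the square of a polynomial, cites Lemma \ref{bonus} to identify the top term $(-1)^{(n-1)(n-2)/2}(n-1)^{n-1}a_{n-1}^n$, and asserts that this cannot be of the form $c_0^2a_{n-1}^{2k}$ by the same parity-of-exponent reasoning as in Lemma \ref{vilnius}. One small slip in your version: for $n$ even, $(n-1)(n-2)/2$ is even only when $n\equiv 2\pmod 4$; for $n\equiv 0\pmod 4$ it is odd (e.g.\ $n=4$ gives $3$). That slip is harmless, since a negative leading coefficient is certainly not of the form $c_0^2$, but it should be fixed.

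The step with real content, however, does not hold as stated, and this is a gap you share with the paper rather than one you introduced. An odd number raised to an odd power can perfectly well be a rational square: it is one exactly when the base is itself a square. Thus $(n-1)^{n-1}$ is a perfect square whenever $n-1$ is an odd perfect square, e.g.\ $n=10$ gives $9^9=3^{18}=(3^9)^2$; such $n$ are precisely $n=m^2+1$ with $m\ge 3$ odd, and for these one checks $n\equiv 2\pmod 8$, so the sign is indeed $+1$ and the leading term $(n-1)^{n-1}a_{n-1}^n$ really is of the form $c_0^2a_{n-1}^{2k}$. The argument then gives nothing, and in fact the lemma's conclusion can fail in this range: for $n=10$, $a_1=\cdots=a_8=0$ and $c=0$, Lemma \ref{regen} gives $\Delta(a_9)=9^9a_9^{10}=(3^9a_9^5)^2$, so $z^2-\Delta(a_9)$ factors over $\qu$. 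So the concluding numerical check you flagged as "the only step with real content" is indeed where the argument (and the paper's) breaks down; repairing it requires either excluding these degenerate specialisations or bringing lower-order terms of $\Delta$ into play.
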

\begin{proof}
Similarly to the proof of Lemma \ref{vilnius}, Lemma \ref{bonus} shows that the
term in $\Delta(a_{n-1})=\Delta(a_1, \ldots, a_{n-2}, a_{n-1}, c)$
with biggest exponent is
\[
  (-1)^{\frac{(n-1)(n-2)}{2}} (n-1)^{n-1} a_{n-1}^n,
\]
which can't be a rational square. This implies that $\Delta(a_{n-1})$ can't be
the square of a rational polynomial, whence $z^2-\Delta(a_{n-1})$ must be
irreducible.
\end{proof}
\begin{lemma}
\label{athen}
Let $F \in \zet[X_1, X_2]$ be of degree $d$
and irreducible over $\qu$. Further,
let $P_1, P_2$ be real numbers such that $P_1, P_2 \ge 1$, and let
\[
  N(F; P_1, P_2) = \#\{\mathbf{x} \in \zet^2:
  F(\mathbf{x})=0
  \mbox{ and } |x_i| \le P_i \; (1 \le i \le 2)\}.
\]
Moreover, let
\[
  T = \max \left\{ \prod_{i=1}^2 P_i^{e_i} \right\}
\]
with the maximum taken over all integer $2$-tuples $(e_1, e_2)$ for
which the corresponding monomial $X_1^{e_1} X_2^{e_2}$ occurs
in $F(X_1, X_2)$ with nonzero coefficient. Then
\begin{equation}
\label{pigs}
  N(F; P_1, P_2) \ll_{d, \epsilon}
  \max\{P_1, P_2\}^{\epsilon}
  \exp \left( \frac{\log P_1 \log P_2}{\log T} \right).
\end{equation}
\end{lemma}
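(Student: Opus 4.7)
The plan is to apply the determinant method of Bombieri--Pila in the lopsided-box form for plane curves developed by Heath-Brown, which I suspect is essentially the shape in which this lemma appears in \cite{B}. The whole point of the statement is to replace the ``homogeneous'' quantity $\max(P_1,P_2)^d$ appearing in classical versions by the anisotropic quantity $T$, which genuinely reflects the Newton polygon of $F$ against the weights $(\log P_1, \log P_2)$.

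In the setup, enumerate the integer zeros as $\mathbf{x}^{(1)}, \ldots, \mathbf{x}^{(N)}$ with $N=N(F;P_1,P_2)$, and for a parameter $D$ to be optimised introduce the lopsided ``Newton region''
\[
  \mathcal{M}(D) = \{(e_1,e_2) \in \zet_{\ge 0}^2 : P_1^{e_1} P_2^{e_2} \le T^D\},
\]
of cardinality $M \sim (D\log T)^2 / (2 \log P_1 \log P_2)$ by an elementary lattice-point count. The goal is to force the existence of a nonzero auxiliary polynomial $G \in \zet[X_1,X_2]$, supported on $\mathcal{M}(D)$, that vanishes at every $\mathbf{x}^{(i)}$; coupled with irreducibility of $F$, this will turn a lower bound on $N$ into an upper bound via B\'ezout.

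To produce $G$ I would form the $N \times M$ evaluation matrix whose $(i,(e_1,e_2))$-entry is $(x_1^{(i)})^{e_1}(x_2^{(i)})^{e_2}$. Each entry has absolute value at most $T^D$, so Hadamard's inequality controls every $M \times M$ sub-determinant from above. The complementary $p$-adic lower bound comes from partitioning the integer points according to their reductions modulo auxiliary primes $p$: points lying in a common residue class contribute columns that agree to high $p$-adic accuracy, and row-reduction forces each $M \times M$ minor to be divisible by a large power of $p$. Running this over a carefully chosen range of primes forces all such minors to vanish as soon as $N$ exceeds $M$ by a factor of $\max(P_1,P_2)^{O(\epsilon)}$, and the resulting rank deficiency yields $G$.

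Because the Newton polygon of $F$ contains a monomial attaining the value $T$ whereas every monomial of $G$ evaluates to at most $T^D$ on the box, choosing $D$ slightly less than $1$ guarantees that $G$ is not a scalar multiple of $F$. Irreducibility of $F$ then forces $\gcd(F,G)=1$, and B\'ezout's theorem bounds the common zeros by $\deg F\cdot \deg G \ll_d D^2 (\log T)^2/(\log P_1 \log P_2)$. Letting $D \to 1^-$ and absorbing the $\max(P_1,P_2)^\epsilon$ slack from the $p$-adic step produces the claimed bound. The main obstacle is the lopsided $p$-adic determinant estimate itself: one must combine the mod-$p$ partition with Hadamard's inequality in a way that remains effective when $\log P_1$ and $\log P_2$ are very different in magnitude, which is precisely the regime in which this lemma is needed for the application to Theorem \ref{nachrichten}.
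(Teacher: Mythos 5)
The paper does not prove this lemma from scratch: it is quoted directly from Browning--Heath-Brown (\cite{BH}, Theorem 1) and Heath-Brown (\cite{HB}, Theorem 15), so you are in effect reproving the cited result. Your toolkit (the lopsided determinant method) is the right one, but the sketch has a structural error that invalidates the final bound. The $p$-adic lower bound for an $M\times M$ minor only applies when the $M$ points indexing its rows lie in a \emph{common} residue class modulo $p$ (equivalently, in a common $p$-adic disc on the curve, where the local one-dimensionality of $F=0$ forces the rows to satisfy many congruences). It does not force every minor of the full $N\times M$ evaluation matrix to vanish, so you cannot conclude that a single auxiliary $G$ annihilates all $N$ points once $N\gg M\max(P_1,P_2)^{\epsilon}$. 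The correct architecture is: partition the points into residue classes modulo a suitable prime $p$, show that the points in each class lie on an auxiliary curve $G=0$ with $G$ not divisible by $F$ and $\deg G\ll_{d,\epsilon}1$, and then apply B\'ezout class by class. The quantity $\exp(\log P_1\log P_2/\log T)$ in the conclusion is the \emph{number of classes} (i.e.\ of auxiliary curves), not a B\'ezout count.

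Your final accounting makes the problem concrete: you bound $N$ by $\deg F\cdot\deg G\ll_d D^2(\log T)^2/(\log P_1\log P_2)$, which in the balanced case $P_1=P_2=P$, $T\asymp P^{d}$ is $O_d(1)$. That would say an irreducible plane curve of degree $d$ has boundedly many integer points in $[-P,P]^2$, contradicted already by $X_2=X_1^{d}$, which has $\asymp P^{1/d}$ such points; the true bound $\exp(\log P_1\log P_2/\log T)=P^{1/d}$ is exactly Bombieri--Pila and cannot come out of a single application of B\'ezout. A smaller gap: knowing that $G$ is not a scalar multiple of $F$ does not give $\gcd(F,G)=1$ even for irreducible $F$ (take $G=X_1F$); you need that $F$ does not divide $G$, which in \cite{HB} is arranged by supporting $G$ on monomials outside the staircase of the leading term of $F$, or in your normalisation by a Newton-polytope (Minkowski sum) argument showing that any nonzero multiple of $F$ must contain a monomial with $P_1^{e_1}P_2^{e_2}\ge T>T^{D}$.
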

\proof
This can be immediately deduced from \cite{BH}, Theorem 1 or \cite{HB},
Theorem 15; see the proof of the same Lemma 8 in \cite{D} for more details.
\begin{corollary}
\label{greenwich}
Let $f(X_1,X_2) \in \qu[X_1,X_2]$ be of degree $d$ and irreducible over $\qu$.
Moreover, let $P \ge 1$ and $\epsilon>0$. Then, uniformly in $f$, we have
\[
  \#\{(x_1,x_2) \in \zet^2: |x_1|, |x_2| \le P \mbox{ and } f(x_1,x_2)=0\}
  \ll_{d,\epsilon} P^{\frac{1}{d}+\epsilon}.
\]
\end{corollary}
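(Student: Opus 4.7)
The plan is to deduce the corollary directly from Lemma \ref{athen} specialised to the symmetric box $P_1 = P_2 = P$. As a preliminary step I would clear a common denominator of the coefficients of $f$; this yields an element of $\zet[X_1,X_2]$ of the same degree $d$, still irreducible over $\qu$, and cutting out the same zero set in $\zet^2$. Thus we may assume without loss of generality that $f \in \zet[X_1,X_2]$ and invoke Lemma \ref{athen} with $F = f$.

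Next I would evaluate the quantity $T$ appearing in that lemma. For $P \ge 1$, each monomial $X_1^{e_1}X_2^{e_2}$ occurring in $f$ with non-zero coefficient contributes $P^{e_1+e_2}$, and since $f$ has total degree exactly $d$, at least one such monomial has $e_1 + e_2 = d$. Because $P \ge 1$ the maximum is therefore attained by any top-degree monomial, giving $T = P^d$.

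Substituting $P_1 = P_2 = P$ and $T = P^d$ into the estimate (\ref{pigs}) yields, for $P > 1$,
\[
  N(f;P,P) \ll_{d,\epsilon} P^{\epsilon} \exp\!\left( \frac{(\log P)^{2}}{d \log P} \right) = P^{\epsilon} \cdot P^{1/d} = P^{1/d+\epsilon},
\]
which is the claimed bound; the case $P = 1$ is trivial since then the box contains at most nine lattice points, comfortably absorbed into the implied constant. The uniformity in $f$ asserted in the corollary is inherited from Lemma \ref{athen}, whose implied constant depends only on $d$ and $\epsilon$. There is no genuine obstacle in this deduction: the only substantive observation is that, because $\deg f = d$, the denominator $\log T$ equals $d\log P$, and this is exactly what turns the exponential factor of Lemma \ref{athen} into the clean power $P^{1/d}$.
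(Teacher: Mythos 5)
Your proposal is correct and follows essentially the same route as the paper: the paper's proof is a one-line application of Lemma \ref{athen} with $P_1=P_2=P$ (noting that the corollary is just the Bombieri--Pila bound). Your additional details --- clearing denominators so that $f\in\zet[X_1,X_2]$, computing $T=P^d$ from the top-degree monomial, and handling $P=1$ separately --- are all sound and merely make explicit what the paper leaves implicit.
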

\begin{proof}
This follows immediately from Lemma \ref{greenwich} by choosing $P_1=P_2=P$.
In fact, this Corollary is the well known Bombieri-Pila result \cite{BP}.
\end{proof}
\begin{lemma}
\label{contagion}
Let $\epsilon>0$, let $c \ge 1$, let $n \ge 3$,
let $a_1, \ldots, a_{n-2} \in \zet$ and let
$d_1, d_2, d_3 \in \qu$ such that $(d_1,d_2) \ne (0,0)$. Then, in the
notation of \S \ref{bad_teacher}, the system of simultaneous equations
\[
  z^2=\Delta(a_1,\ldots,a_n)
\]
and
\begin{equation}
\label{staines}
  d_1 a_{n-1}+d_2a_n+d_3=0
\end{equation}
has at most $O_{n,\epsilon}(H^{\frac{1}{2}+\epsilon})$ solutions
$z,a_{n-1},a_n$ such that $|a_{n-1}|, |a_n| \le H$ and $|z| \le H^c$.
\end{lemma}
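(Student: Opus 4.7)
My plan is to use the linear relation (\ref{staines}) to eliminate one of the variables $a_{n-1}, a_n$, turning the system into a single bivariate integer equation $F(z,x)=0$ to which the uniform curve-counting result of Lemma \ref{athen} applies. The case split is forced by the form of (\ref{staines}): either $d_1\ne 0$, in which case I set $a_{n-1}=c_1 a_n + c_2$ with $c_1=-d_2/d_1$, $c_2=-d_3/d_1\in\qu$; or $d_1=0$ (so $d_2\ne 0$), in which case $a_n=c:=-d_3/d_2$ is constant.

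In the first case, substituting and clearing denominators gives an integer polynomial $F(z,a_n)$ which is irreducible over $\qu$ by Lemma \ref{vilnius}; using Lemma \ref{bonus} if $c_1\ne 0$ and Lemma \ref{cameron_diaz} if $c_1=0$, its degree $m$ in $a_n$ equals $n$ or $n-1$ respectively, and the leading coefficient in $a_n$ is a nonzero constant (in the former case coming from the explicit homogeneous top form $(-1)^{(n-1)(n-2)/2}(n-1)^{n-1}a_{n-1}^n$ of $\Delta$, in the latter from the nonvanishing leading term $(-1)^{n(n-1)/2}n^n a_n^{n-1}$ provided by Lemma \ref{cameron_diaz}). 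In the second case, Lemma \ref{heathrow} gives irreducibility of $F(z,a_{n-1})$, and Lemma \ref{bonus} again yields degree $m=n$ in $a_{n-1}$ with nonzero leading coefficient. In every scenario the monomials $z^2$ and $x^m$ (where $x$ is the surviving coordinate) appear in $F$ with nonzero integer coefficients, and $m\ge n-1\ge 2$, so Lemma \ref{athen} applies with degree bound depending only on $n$.

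I then apply Lemma \ref{athen} with $P_1=H^c$ and $P_2=H$. The two distinguished monomials imply $T\gg\max(H^{2c},H^m)$. If $c\le m/2$ this yields $N\ll H^\epsilon\exp(c(\log H)^2/(m\log H))=H^{c/m+\epsilon}\le H^{1/2+\epsilon}$; if $c\ge m/2$ it yields $N\ll H^\epsilon\exp(c(\log H)^2/(2c\log H))=H^{1/2+\epsilon}$. Either way the desired bound $O_{n,\epsilon}(H^{1/2+\epsilon})$ drops out, uniformly in the auxiliary parameter $c$. The only substantive point is the degree bookkeeping after substitution: one must verify that the leading monomial $x^m$ does not accidentally cancel when the linear change of variables is carried out, and this is exactly the content of the explicit leading-term information in Lemmas \ref{cameron_diaz} and \ref{bonus}, packaged through Lemmas \ref{vilnius} and \ref{heathrow}.
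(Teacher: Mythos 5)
Your proposal is correct and follows essentially the same route as the paper: the same case split on $d_1\ne 0$ versus $d_1=0$, the same appeal to Lemmas \ref{vilnius} and \ref{heathrow} for irreducibility, and the same application of Lemma \ref{athen} with $P_1=H^c$, $P_2=H$. The only difference is that you also track the monomial $x^m$ in the surviving variable, which is harmless but unnecessary: since $c\ge 1$, the single observation $T\ge H^{2c}$ coming from the $z^2$ term already gives $\exp(\log P_1\log P_2/\log T)\le H^{1/2}$, which is all the paper uses.
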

\begin{proof}
First suppose that $d_1 \ne 0$. Then by (\ref{staines}) we can write
$a_{n-1}=c_1a_n+c_2$ for suitable $c_1, c_2 \in \qu$. By Lemma
\ref{vilnius}, the polynomial
\[
  f(z,a_n)=z^2-\Delta(a_1, \ldots, a_{n-2},
  c_1 a_n+c_2, a_n)
\]
then is irreducible over $\qu$.
Applying Lemma \ref{athen} with $P_1=H^c$ and $P_2=H$,
noting that $T \ge H^{2c}$ since the term $z^2$
shows up in $f(z,a_n)$, we obtain
\[
  \#\{(z,a_n) \in \zet^2: |z| \le H^c, |a_n| \le H \mbox{ and } f(z,a_n)=0\}
  \ll_{n, \epsilon} H^{\frac{1}{2}+\epsilon},
\]
confirming the lemma in that case. Now assume that $d_1=0$. Then $d_2 \ne 0$,
so by (\ref{staines}) we have $a_n=c$ for a suitable $c \in \qu$. Using
Lemma \ref{heathrow} this time, again we find that $f(z,a_{n-1})=z^2-
\Delta(a_1,\ldots,a_{n-1},c)$ is irreducible over $\qu$, and analogously
to above the conclusion of the lemma follows from Lemma \ref{athen}.
\end{proof}
\section{Absolute irreducibility of the discriminant variety}
In this section we show that for `most' choices of $a_1, \ldots, a_{n-2}$,
the polynomial $z^2-\Delta(a_{n-1},a_n)$ is absolutely irreducible.
\begin{lemma}
\label{rrr}
Let $n$ be a positive integer and
\[
  f(X_1,X_2,X_3)=\sum c_{ijk} X_1^i X_2^j X_3^k
\]
be a rational polynomial of degree $n$. Then there exists an integer
polynomial $F$ in the coefficients $c_{ijk}$ of $f$ such that $f$ is
absolutely irreducible if and only if $F$ evaluated at the $c_{ijk}$ is
different from zero. The polynomial $F$ depends only on $n$.
\begin{proof}
This is a special case of a well known result; see \cite{N}.
\end{proof}
\end{lemma}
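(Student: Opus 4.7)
\emph{Proof proposal.} The plan is to deduce this from Emmy Noether's classical theorem on absolute irreducibility, cited as \cite{N}: the set of absolutely reducible polynomials of fixed degree in a fixed number of variables is cut out by polynomial equations in the coefficients, and in particular is a proper Zariski-closed subvariety of the affine space parametrising such polynomials.

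Setup: I would identify a degree-$n$ polynomial $f \in \qu[X_1,X_2,X_3]$ with its coefficient vector $(c_{ijk})$ in $\mathbb{A}^N$, where $N=\binom{n+3}{3}$. The absolutely reducible locus $R\subset \mathbb{A}^N$ is the union, over decompositions $n=d_1+d_2$ with $1\le d_1\le d_2\le n-1$, of the images of the multiplication morphisms
\[
  \mu_{d_1,d_2}:\mathbb{A}^{\binom{d_1+3}{3}}\times\mathbb{A}^{\binom{d_2+3}{3}}\to\mathbb{A}^N,\qquad (g,h)\mapsto gh.
\]
A routine dimension count gives $\binom{d_1+3}{3}+\binom{d_2+3}{3}\le N-1$ (the surplus absorbs the one-parameter scaling fibre $(g,h)\mapsto(\lambda g,\lambda^{-1}h)$), so each image has dimension strictly less than $N$, whence $R$ is a proper subset of $\mathbb{A}^N$.

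Next I would argue that $R$ is Zariski-closed: passing to projective completions, the image of each $\mu_{d_1,d_2}$ is closed by properness of products of projective spaces, and a finite union of closed sets is closed. Since every step of this construction is defined over $\zet$, the vanishing ideal of $R$ contains a nonzero polynomial $F\in\zet[c_{ijk}]$, and this $F$ depends only on $n$. Then $F(c_{ijk})\ne 0$ forces $(c_{ijk})\notin R$ and hence $f$ is absolutely irreducible. Conversely, by construction $F$ vanishes on every absolutely reducible $f$; arranging $F$ so that its zero set is \emph{exactly} $R$ (rather than merely containing it) is the more delicate part, carried out in \cite{N} by producing an explicit finite system of such polynomials whose non-vanishing characterises absolute irreducibility, and taking $F$ to be any convenient combination.

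The main obstacle is precisely this last point---showing closedness of each image under multiplication and producing an explicit defining polynomial over $\zet$---and it is exactly the content of Noether's theorem. I would therefore invoke \cite{N} as a black box rather than reprove it, since only the existence of $F$ (not its explicit form) is needed in the sequel.
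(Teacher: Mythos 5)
Your proposal is correct and matches the paper, which proves this lemma by a bare citation of Noether's theorem \cite{N}; you ultimately invoke the same result as a black box. Your surrounding sketch (positive codimension and Zariski-closedness of the reducible locus via properness, definability over $\zet$) is a reasonable gloss, and you rightly flag that the genuinely delicate point---arranging for the zero set of a \emph{single} $F$ to coincide exactly with the reducible locus, so that both directions of the equivalence hold---is the content of \cite{N} rather than something your dimension count delivers.
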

\begin{lemma}
\label{ttt}
Let $K$ be any field of characteristic zero. Then the splitting field of
the polynomial
\[
  X^n+aX+b
\]
over the function field $K(a,b)$ has Galois group $S_n$.
\end{lemma}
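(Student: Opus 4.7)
The plan is to show that the Galois group $G \subseteq S_n$ of the splitting field of $X^n + aX + b$ over $K(a, b)$ equals $S_n$ by verifying (i) that $G$ acts $2$-transitively on the $n$ roots and (ii) that $G$ contains a transposition. Since $2$-transitivity implies primitivity, Jordan's classical theorem then forces $G = S_n$.

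For transitivity I would note that $X^n + aX + b$, viewed in $K(a)[b, X]$, is linear in $b$ and hence irreducible, and Gauss's lemma upgrades this to irreducibility in $K(a, b)[X]$. For $2$-transitivity, let $\alpha$ be a root in the splitting field; then $b = -\alpha^n - a\alpha$ gives $K(a, b, \alpha) = K(a, \alpha)$ with $a$ and $\alpha$ algebraically independent over $K$, and
\[
  \frac{X^n + aX + b}{X - \alpha} = X^{n-1} + \alpha X^{n-2} + \cdots + \alpha^{n-1} + a
\]
is linear in $a$ as an element of $K(\alpha)[a, X]$, so irreducible there, and by Gauss's lemma also irreducible in $K(a, \alpha)[X]$. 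Hence the stabilizer of $\alpha$ acts transitively on the remaining $n - 1$ roots.

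For the transposition I would exploit the discriminant. By Lemma \ref{regen},
\[
  \Delta(a, b) = (-1)^{n(n-1)/2} n^n b^{n-1} + (-1)^{(n-1)(n-2)/2} (n-1)^{n-1} a^n,
\]
which is irreducible in $K[a, b]$ since $\gcd(n, n-1) = 1$. After passing to $\bar{K}(a, b)$, let $R$ be the local ring at a generic smooth point of $\{\Delta = 0\}$, with uniformizer $\pi = \Delta$. By Hensel's lemma, $f = f_1 \cdot f_2$ in $R[X]$ where $f_1$ is a monic quadratic with $f_1 \equiv (X - \beta)^2 \pmod{\pi}$ and $f_2$ splits into $n - 2$ distinct linear factors in $R[X]$. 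Using the identity $\mathrm{disc}(f) = \mathrm{disc}(f_1) \cdot \mathrm{disc}(f_2) \cdot \mathrm{res}(f_1, f_2)^2$ together with the fact that at a generic point of $\{\Delta = 0\}$ only two roots collide (so $\mathrm{disc}(f_2)$ and $\mathrm{res}(f_1, f_2)$ are units of $R$), the $\pi$-adic valuation of $\mathrm{disc}(f_1)$ equals $1$. Hence $R[X]/(f_1)$ is a ramified quadratic extension whose inertia generator transposes the two roots of $f_1$ and fixes all the others, yielding a transposition in $G$.

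The main obstacle is the last step. The genericity of the chosen point is essential both to ensure that exactly two roots collide rather than three or more, and that the remaining $n - 2$ roots are pairwise distinct and distinct from $\beta$, so that Hensel's lemma applies; both follow because $\Delta$ is an irreducible polynomial vanishing simply on its smooth locus, so the `higher coincidence' loci form a proper closed subvariety of $\{\Delta = 0\}$. A purely topological alternative, available when $K$ embeds in $\ce$, would be to invoke the monodromy of the degree-$n$ branched cover $\{(a, b, x) : x^n + ax + b = 0\} \to K^2$ around a small loop encircling $\{\Delta = 0\}$, which is manifestly a transposition.
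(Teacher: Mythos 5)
Your proof is essentially correct, but it takes a genuinely different route from the paper: the paper's entire proof of Lemma \ref{ttt} is a one-line citation to Corollary 2 of Uchida \cite{Y} (after the harmless sign change $-a \mapsto a$), whereas you give a self-contained argument via the classical criterion that a $2$-transitive subgroup of $S_n$ containing a transposition is all of $S_n$. Your two irreducibility steps (linearity in $b$, resp.\ in $a$, plus Gauss's lemma, using $K(a,b,\alpha)=K(\alpha,a)$ with $\alpha,a$ algebraically independent) are clean and correct, and the transposition from tame ramification along the irreducible curve $\Delta=0$ is the standard mechanism; for the trinomial family the ``higher coincidence'' locus you worry about is in fact just the single point $a=b=0$ (a root of multiplicity $\ge 3$ forces $f''=0$, hence $x=0$, hence $a=b=0$, and two distinct double roots are likewise impossible), so genericity is easy to nail down. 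Two small imprecisions: the residue field of the local ring at the generic point of $\{\Delta=0\}$ is the function field of that curve, not algebraically closed, so $f_2$ need not split into \emph{linear} factors over $R$ --- but it is separable modulo $\pi$, so the inertia generator still fixes all of its roots, which is all you use; and your monodromy alternative in fact covers every characteristic-zero $K$, since the geometric Galois group over $\bar{K}(a,b)$ is insensitive to the choice of algebraically closed base field in characteristic zero and embeds into the group over $K(a,b)$. What your approach buys is independence from the literature and a template that adapts to other two-parameter families; what the paper's citation buys is brevity, since Uchida's result is exactly this statement.
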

\begin{proof}
This is essentially Corollary 2 in \cite{Y} (switching $-a$ to $a$ obviously
does not change the result).
\end{proof}
\begin{lemma}
\label{cs}
Let $N(H)$ be the number of integers $a_1, \ldots, a_{n-2}$ such that
$|a_i| \le H \; (1 \le i \le n-2)$ and the polynomial
\[
  z^2-\Delta(a_1, \ldots, a_{n-2}; a_{n-1}, a_n)=z^2-\Delta(a_{n-1},a_n)
\]
as a rational polynomial in $z, a_{n-1}, a_n$ is not absolutely irreducible.
Then
\[
  N(H) \ll H^{n-1}.
\]
\end{lemma}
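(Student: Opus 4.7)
The plan is to convert the property ``$z^2-\Delta(a_{n-1},a_n)$ is not absolutely irreducible'' into the vanishing of a universal integer polynomial in the variables $a_1,\ldots,a_{n-2}$, and then count its integer zeros by a standard Schwartz--Zippel-type estimate. By Lemmas \ref{cameron_diaz} and \ref{bonus}, $f(z,a_{n-1},a_n)=z^2-\Delta(a_1,\ldots,a_n)$ has total degree $n$ in $(z,a_{n-1},a_n)$, with coefficients that are integer polynomials in $a_1,\ldots,a_{n-2}$. Lemma \ref{rrr} supplies an integer polynomial $F$, depending only on $n$, in the abstract coefficients of a degree-$n$ three-variable polynomial, whose non-vanishing detects absolute irreducibility. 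Substituting the coefficients of $f$ into $F$ yields an integer polynomial $G(a_1,\ldots,a_{n-2})$, of degree bounded in terms of $n$, and the problem reduces to bounding the number of integer zeros of $G$ in $[-H,H]^{n-2}$.

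Assuming $G\not\equiv 0$, a standard slicing argument (or the Schwartz--Zippel bound) yields $O_n(H^{n-3})$ integer zeros, which is comfortably within the claimed $O(H^{n-1})$. The heart of the argument is therefore to exhibit one tuple $(a_1,\ldots,a_{n-2})\in\zet^{n-2}$ at which $G$ does not vanish. I would specialize to $a_1=\cdots=a_{n-2}=0$: by Lemma \ref{regen}, the discriminant becomes that of the trinomial $X^n+a_{n-1}X+a_n$, which by Lemma \ref{ttt} (applied with $K=\overline{\qu}$) has Galois group $S_n$ over $\overline{\qu}(a_{n-1},a_n)$. Since $S_n\not\subseteq A_n$, this discriminant is not a square in $\overline{\qu}(a_{n-1},a_n)$, and hence not a square in the UFD $\overline{\qu}[a_{n-1},a_n]$. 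It follows that $z^2-\Delta(0,\ldots,0,a_{n-1},a_n)$ admits no factorization $(z-h)(z+h)$ with $h\in\overline{\qu}[a_{n-1},a_n]$ and is therefore absolutely irreducible, giving $G(0,\ldots,0)\neq 0$.

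The main obstacle is this very step of producing a non-vanishing specialization of $G$, which is precisely where the Galois-theoretic input from Lemma \ref{ttt} is needed; the remainder is bookkeeping (tracking degrees via Lemmas \ref{cameron_diaz} and \ref{bonus} so that the constant in the final slicing estimate depends only on $n$, and verifying the standard equivalence between absolute irreducibility of $z^2-g$ in three variables and non-squareness of $g$ in $\overline{\qu}[a_{n-1},a_n]$).
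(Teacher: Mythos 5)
Your proposal is correct and follows essentially the same route as the paper: invoke Lemma \ref{rrr} to encode failure of absolute irreducibility as the vanishing of an integer polynomial in $a_1,\ldots,a_{n-2}$, then show this polynomial is not identically zero via the specialization $a_1=\cdots=a_{n-2}=0$, where Lemma \ref{ttt} forces the trinomial discriminant to be a non-square over $\overline{\qu}(a_{n-1},a_n)$. Your Schwartz--Zippel bound $O_n(H^{n-3})$ is in fact sharper than the stated $O(H^{n-1})$, which is all the paper needs.
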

\begin{proof}
By Lemma \ref{rrr}, there exists an integer polynomial $F(a_1, \ldots,
a_{n-2})$ with the following property: For fixed $a_1, \ldots, a_{n-2}$,
the polynomial $z^2-\Delta(a_{n-1},a_n)$ is absolutely irreducible if and
only if $F(a_1, \ldots, a_{n-2}) \ne 0$. Hence, with respect to Lemma
\ref{cs}, it is sufficient to show that $F$ is not identically
zero. To this end it is enough to find one specialisation for $a_1, \ldots,
a_{n-2}$ for which  $F(a_1, \ldots, a_{n-2}) \ne 0$. It is easy to see that the
choice $a_1=\ldots=a_{n-2}=0$ works. For suppose that in this case
$F(a_1, \ldots, a_{n-2})=0$. Then $z^2-\Delta(a_{n-1},a_n)$ were reducible
over some algebraic extension $K$ of $\qu$. In particular,
$\Delta(a_{n-1},a_n)$ were a square over the polynomial ring $K[a_{n-1},
a_n]$. Hence over the function field $K(a_{n-1}, a_n)$, the polynomial
$X^n+a_{n-1}X+a_n$ had a discriminant being a square, implying that its
Galois group were a subgroup of the alternating group $A_n$ rather than
the full symmetric group $S_n$. This however contradicts Lemma
\ref{ttt}.
\end{proof}
\section{Proof of Theorem \ref{nachrichten}} 
Our main tool in proving Theorem \ref{nachrichten} is the following recent
result of Salberger.
\begin{lemma}
\label{evil}
Let $g(X_1,X_2,X_3) \in \zet[X_1,X_2,X_3]$ be absolutely irreducible of
degree $d$, and let $B_1, B_2, B_3 \ge 1$. Write
\[
  S=\{\mathbf{x} \in \zet^3:g(x_1,x_2,x_3)=0 \mbox{ and }
  |x_i| \le B_i \; (1 \le i \le 3)\}.
\]
Moreover, let
\[
  T = \max\{B_1^{e_1} B_2^{e_2} B_3^{e_3}\},
\]
where the maximum is over all tuples $(e_1, e_2, e_3)$ for which the
corresponding monomial $X_1^{e_1} X_2^{e_2} X_3^{e_3}$ occurs in $g$
with non-zero coefficient. Furthermore, let
\[
  V = \exp \left( \left( \frac{(\log B_1)(\log B_2)(\log B_3)}{\log T}\right)
  ^{\frac{1}{2}}\right).
\]
Finally, let $\epsilon>0$, and write
\[
  B=\max\{B_1, B_2, B_3\}.
\]
Then there exist polynomials $g_1, \ldots, g_J \in \zet[X_1, X_2, X_3]$ and
a finite subset $Z \subset S$ with the following properties:
\begin{itemize}
  \item[(i)] $J \ll_{d,\epsilon} VB^{\epsilon}$,
  \item[(ii)] Each $g_j$ is coprime to $g$ and has degree only bounded in
  terms of $d$ and $\epsilon$,
  \item[(iii)] $\#Z \ll_{d,\epsilon} V^2B^{\epsilon}$,
  \item[(iv)] Each $(x_1,x_2,x_3) \in S\backslash Z$ satisfies
  $g(x_1,x_2,x_3)=g_j(x_1,x_2,x_3)=0$ for some $j \le J$.
\end{itemize}
\end{lemma}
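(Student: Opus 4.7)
The approach I would follow is the $p$-adic determinant method pioneered by Heath-Brown and refined by Salberger; Corollary \ref{greenwich} (the Bombieri--Pila bound) is the curve analogue, and for surfaces one needs the $p$-adic improvement because the real-variable determinant method alone does not produce an exponent involving the square root that governs $V$.

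The first step is to handle singularities. The singular locus of $g = 0$ has dimension at most one, so its integer points with $|x_i| \le B_i$ can be absorbed into $Z$ by applying Corollary \ref{greenwich} to suitable projections, producing a contribution comfortably below $V^2 B^{\epsilon}$. This allows us to restrict attention to smooth integer points, on which the determinant method can be brought to bear.

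Now choose an auxiliary prime $p$ of size to be optimised, and group the smooth integer points of $S$ according to their reductions modulo $p$. For each smooth point $\xi$ of the reduction of $g$ modulo $p$, let $S_{\xi} \subset S$ be the fibre over $\xi$ and form the matrix $M_{\xi}$ whose rows are indexed by the points of $S_{\xi}$ and whose columns are indexed by representatives, modulo $g$, of the monomials $X_1^{e_1} X_2^{e_2} X_3^{e_3}$ of weighted size $B_1^{e_1} B_2^{e_2} B_3^{e_3} \le T_0$, for a parameter $T_0$ yet to be chosen. A lattice point count in the simplex $e_i \ge 0$, $\sum_i e_i \log B_i \le \log T_0$ shows that the number $N$ of such monomials has order $(\log T_0)^3/(\log B_1 \log B_2 \log B_3)$ up to lower-order terms. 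If $\#S_{\xi} > N$, every $N \times N$ minor of $M_{\xi}$ must vanish: on one hand it is bounded in absolute value by the Hadamard-type quantity $N^{N/2} T_0^N$; on the other, because all rows reduce to a common row modulo $p$, local Taylor expansion transverse to the surface at $\xi$ forces additional $p$-adic divisibility row by row, making the minor divisible by a power of $p$ that exceeds the Hadamard bound as soon as $p$ is taken sufficiently large. A non-trivial relation among the rows then furnishes an auxiliary polynomial $g_{\xi}$ of degree $O_{d,\epsilon}(1)$, automatically coprime to $g$ since the columns were chosen as representatives modulo $g$.

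The crux, and the principal technical obstacle, is the simultaneous optimisation of $T_0$ and $p$: one wants $\log T_0$ and $\log p$ both of order $(\log B_1 \log B_2 \log B_3/\log T)^{1/2}$, so that $N$ matches the exponent appearing in $V$ and the number of admissible smooth fibres is $O(p^2) \ll V^2$. Carrying this out, tracking dependence on $d$ and absorbing sub-polynomial losses into the factor $B^{\epsilon}$, produces $J \ll_{d,\epsilon} V B^{\epsilon}$ auxiliary polynomials and an exceptional set $Z$ of size $\ll_{d,\epsilon} V^2 B^{\epsilon}$. The subtle point that distinguishes Salberger's theorem from earlier incarnations of the determinant method, and where I would expect to struggle most without his refinement, is verifying that the determinant estimate remains valid in the \emph{lopsided} regime where the $B_i$ may be of very different sizes, so that the weighted monomial quantity $T$ rather than $\max_i B_i^d$ is what appears in $V$.
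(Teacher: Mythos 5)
You are attempting something far more ambitious than the paper does: the paper's entire ``proof'' of this lemma is a citation (it is Lemma 1 of Browning's paper on power-free values of polynomials, which is the case $n=3$ of Salberger's theorem), so there is no argument in the paper to compare against, and your sketch must be judged as a free-standing reconstruction of Salberger's determinant method. The general strategy you describe --- fibring the integer points over smooth points of the reduction modulo an auxiliary prime, comparing a Hadamard bound for a monomial determinant with forced $p$-adic divisibility, and extracting auxiliary polynomials coprime to $g$ from the resulting linear dependence --- is indeed the right road map. But the sketch has genuine gaps at precisely the points that make the lemma what it is. The defining feature of the statement is the appearance of $\log T$ in the exponent of $V$, i.e.\ its validity in lopsided boxes; your monomial count $(\log T_0)^3/(\log B_1\log B_2\log B_3)$ is the count for the full weighted simplex, whereas the determinant method on the surface requires monomials that remain linearly independent modulo $g$, and it is exactly the subtraction of the translated sub-simplex attached to the leading monomial of $g$ --- whose weighted size is what $T$ measures --- that produces the ratio $(\log B_1\log B_2\log B_3)/\log T$ inside $V$. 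You explicitly defer this step and say it is where you would struggle; without it the stated form of $V$, and hence the conclusion, is not reached.

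Two further points would fail as written. First, your bookkeeping of (i) and (iii) is internally inconsistent: the construction produces one auxiliary polynomial per smooth fibre of the reduction mod $p$, hence $J\ll p^{2}$, and you simultaneously take the number of fibres to be $O(p^{2})\ll V^{2}$; this yields $J\ll_{d,\epsilon}V^{2}B^{\epsilon}$, matching (iii) but not the claimed $J\ll_{d,\epsilon}VB^{\epsilon}$ of (i). Getting the stronger bound for the number of auxiliary polynomials, while reserving the weaker $V^{2}B^{\epsilon}$ only for the truly exceptional set $Z$, is one of the places where Salberger's argument goes beyond the naive fibre-by-fibre method, and your sketch does not explain how to achieve it. Second, the singular locus cannot be ``absorbed into $Z$'' via Corollary \ref{greenwich}: it is a curve that may contain lines --- for instance $g=X_3^{2}-X_1^{2}X_2^{3}$ is absolutely irreducible of degree $5$ with singular locus the line $X_1=X_3=0$ --- and a line can carry $\asymp B$ integer points, which in general (for larger $d$, and a fortiori in the lopsided regime where $V$ can be bounded) vastly exceeds $V^{2}B^{\epsilon}$. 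The standard remedy is to cover the singular points by finitely many additional polynomials $g_j$, e.g.\ a non-vanishing partial derivative of $g$, which has degree $<d$ and is automatically coprime to the irreducible $g$; this costs only $O(1)$ extra terms in $J$. As it stands the proposal is a plausible outline of the known proof rather than a proof, and for the purposes of this paper the honest course is the one the author takes, namely to quote Browning and Salberger.
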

\begin{proof}
This is Lemma 1 in \cite{B}, which in turn is the special case $n=3$ of a
result of Salberger \cite{S}.
\end{proof}
We are now in a position to prove Theorem \ref{nachrichten}.
In the notation of section
\ref{economists}, let
\begin{eqnarray*}
  M(H) & = & \#\{(a_1, \ldots, a_n) \in \zet^n:|a_i| \le H \; (1 \le i \le n)
  \\ & & \mbox{ and } \Delta(a_1, \ldots, a_n) \mbox{ is a rational square}\}.
\end{eqnarray*}
Using the well known criterion that a polynomial has a Galois group
contained in the alternating group if and only if its discriminant is a square,
we conclude that with respect to Theorem \ref{nachrichten}
it is enough to show that
\[
  M(H) \ll_{n, \epsilon} H^{n-2+\sqrt{2}+\epsilon}.
\]
From Lemma \ref{kapstadt} it is clear that there exists a positive constant
$c \ge 1$, such that whenever $z^2=\Delta(a_1, \ldots, a_n)$ where
$|a_i| \le H \; (1 \le i \le n)$ for sufficiently large $H$, then
$|z| \le H^c$. Moreover, since $\Delta(a_1, \ldots, a_n)$ is an integer for
$a_1, \ldots, a_n \in \zet$, the condition $\Delta(a_1, \ldots, a_n)$ being
a rational square is equivalent to it being an integer square. Thus
\begin{eqnarray*}
  M(H) & \ll & \#\{(a_1, \ldots, a_n, z) \in \zet^{n+1}:|a_i| \le H \;
  (1 \le i \le n), \; |z| \ll H^c \\ & &
  \mbox{ and } z^2=\Delta(a_1, \ldots, a_n)\}.
\end{eqnarray*}
Our strategy is now to fix $a_1, \ldots, a_{n-2}$. There are $O(H^{n-2})$
choices for doing so. By Lemma \ref{cs}, with respect to Theorem
\ref{nachrichten}
we may without loss of generality assume that $z^2-\Delta(a_1, \ldots, a_{n-2};
a_{n-1},a_n)=z^2-\Delta(a_{n-1},a_n)$ is absolutely irreducible as a polynomial
in $z,a_{n-1},a_n$. It is now enough to show that for
\[
  S=\{(a_{n-1},a_n,z) \in \zet^3:|a_{n-1}|,|a_n| \le H, |z| \le H^c
  \mbox{ and } z^2=\Delta(a_{n-1},a_n)\}
\]
the upper bound
\begin{equation}
\label{ppp}
  \# S \ll_{n, \epsilon} H^{\sqrt{2}+\epsilon}
\end{equation}
holds true, uniformly in $a_1, \ldots, a_{n-2}$.
Applying Lemma \ref{evil} with $B_1=B_2=H$ and $B_3=H^c$ we
find that $T \ge H^{2c}$, since the term $z^2$ occurs in $z^2-\Delta(a_{n-1},
a_n)$. Hence
\[
  V = \exp \left( \left( \frac{(\log B_1)(\log B_2)(\log B_3)}{\log T}\right)^
  \frac{1}{2}\right) \le H^{\frac{\sqrt{2}}{2}}.
\]
Now by Lemma \ref{evil}, there exist polynomials $g_1, \ldots, g_J \in
\zet[a_{n-1}, a_n, z]$ and a finite subset $Z \subset S$ such that the
following properties hold true:
\begin{itemize}
  \item[(i)] $J \ll_{n, \epsilon} H^{\frac{\sqrt{2}}{2}+\epsilon}$,
  \item[(ii)] Each $g_j$ is coprime to $z^2-\Delta(a_{n-1},a_n)$
  and has degree only bounded in terms of $n$ and $\epsilon$,
  \item[(iii)] $\#Z \ll_{n, \epsilon} H^{\sqrt{2}+\epsilon}$,
  \item[(iv)] Each $(a_{n-1},a_n,z) \in S\backslash Z$ satisfies
  $g_j(a_{n-1},a_n,z)=0$ for some $j \le J$.
\end{itemize}
With respect to (\ref{ppp}), by (iii) it is now sufficient to show that
\begin{equation}
\label{klassentreffen}
  \#(S\backslash Z) \ll_{n, \epsilon} H^{\sqrt{2}+\epsilon}.
\end{equation}
By (i) and (iv), in turn, to show (\ref{klassentreffen}) it is enough
to prove that for every fixed $j \le J$, we have
\begin{eqnarray}
\label{se}
  & & \{(a_{n-1},a_n,z) \in \zet^3:|a_{n-1}|,|a_n| \le H, \; |z| \le H^c,\\
   \nonumber
  & & z^2=\Delta(a_{n-1},a_n) \mbox{ and } g_j(a_{n-1},a_n,z)=0\}
  \ll_{n, \epsilon} H^{\frac{\sqrt{2}}{2}+\epsilon}.
\end{eqnarray}
So fix any $j \le J$ and consider the system of simultaneous equations
\begin{equation}
\label{bs}
  \left\{ \begin{array}{l}
     z^2 = \Delta(a_{n-1},a_n) \\ g_j(a_{n-1},a_n,z) = 0.
  \end{array} \right.
\end{equation}
We are now going to eliminate $z$ from these equations. For each term in
$g_j(a_{n-1},a_n,z)$ containing an even power of $z$ we can just substitute in
a suitable power of $\Delta(a_{n-1},a_n)$. The same way each term in
$g_j(a_{n-1},a_n,z)$ having an odd power of $z$ can be reduced to a term of
the form $z$ times a power of $\Delta(a_{n-1},a_n)$. So we get a system of
simultaneous equations of the form
\begin{equation}
\label{mushroom}
  \left\{ \begin{array}{l}
     z^2 = \Delta(a_{n-1},a_n) \\ zp_j(a_{n-1},a_n)+q_j(a_{n-1},a_n)=0
  \end{array} \right.
\end{equation}
for suitable $p_j, q_j \in \zet[a_{n-1},a_n]$ which is equivalent to
(\ref{bs}), i.e. every solution $(a_{n-1},a_n,z)$ of (\ref{bs})
is also a solution of (\ref{mushroom}) and vice versa. In
particular, the varieties $W_1$ and $W_2$, defined by (\ref{bs}) and
(\ref{mushroom}), respectively, are the same, consequently
also having the same dimension. Since $z^2-\Delta(a_{n-1}, a_n)$ is
absolutely irreducible and coprime
to $g_j$ by property (ii) above, $W_1$ clearly has dimension one, so the same
must be true for $W_2$. Consequently, $p_j$ and $q_j$ cannot both vanish
identically. Thus if $p_j$ vanishes identically, then we are reduced to the system
\[
  \left\{ \begin{array}{l}
     z^2 = \Delta(a_{n-1},a_n) \\ q_j(a_{n-1},a_n)=0
  \end{array} \right.
\]
for a non identically vanishing $q_j$.
Otherwise, we distinguish two cases: For those solutions $(z,a_{n-1},a_n)$ of
(\ref{mushroom}) where $p_j(a_{n-1}, a_n)=0$, we will consider the
system
\[
  \left\{ \begin{array}{l}
     z^2 = \Delta(a_{n-1},a_n) \\ p_j(a_{n-1},a_n)=0,
  \end{array} \right.
\]
where $p_j$ does not vanish identically. For those solutions $(z,a_{n-1},a_n)$ 
of (\ref{mushroom}) where $p_j(a_{n-1}, a_n) \ne 0$,
we can solve
the second equation for $z$ and substitute into the first equation. This way
we are reduced to a system of the form
\[
  \left\{ \begin{array}{l}
     z^2 = \Delta(a_{n-1},a_n) \\ r_j(a_{n-1},a_n)=0,
  \end{array} \right.
\]
for a suitable polynomial $r_j(a_{n-1},a_n) \in \qu[a_{n-1},a_n]$,
namely $r_j(a_{n-1}, a_n) = \Delta p_j^2 - q_j^2$. Now $r_j$
can't be identically zero, since otherwise the identity
\[
  \Delta(a_{n-1},a_n) = \frac{q_j^2(a_{n-1},a_n)}{p_j^2(a_{n-1},a_n)}
\]
would hold true in the function field $\qu(a_{n-1},a_n)$. Since
$\Delta(a_{n-1},a_n) \in \qu[a_{n-1},a_n]$, this immediately implied that
$\Delta(a_{n-1},a_n)$ is not only a square in $\qu(a_{n-1},a_n)$, but even
in the polynomial ring $\qu[a_{n-1},a_n]$. Then, however, the polynomial
$z^2-\Delta(a_{n-1},a_n)$ became reducible over $\qu$, contradicting its
absolute irreducibility. \\
So in all cases we are reduced to bounding the number of solutions of a
system of equations of the form
\begin{equation}
\label{ap}
  \left\{ \begin{array}{l}
     z^2 = \Delta(a_{n-1},a_n) \\ f_j(a_{n-1},a_n)=0
  \end{array} \right.
\end{equation}
for a suitable $f_j \in \qu[a_{n-1},a_n]$ not vanishing identically, subject
to
\begin{equation}
\label{dung}
  |a_{n-1}|, |a_n| \le H \mbox{ and } |z| \le H^c.
\end{equation}
Note that by property (ii) from above and our construction of $f_j$ above, its
degree is bounded in terms of $n$ and $\epsilon$ only. Hence we can factor
$f_j$ over
$\qu$ into $m_j$ irreducible factors $f_{j,i} \; (1 \le i \le m_j)$, where
$m_j$ depends only on $n$ and $\epsilon$. Therefore the number of solutions of
(\ref{ap}) subject to (\ref{dung}) can be bounded by a constant
depending only on $n$ and $\epsilon$,
times the maximal number of solutions of any of the systems
\begin{equation}
\label{dilbert}
  \left\{ \begin{array}{l}
     z^2 = \Delta(a_{n-1},a_n) \\ f_{j,i}(a_{n-1},a_n)=0
  \end{array} \right.
\end{equation}
subject to (\ref{dung}). Since $f_{j,i}(a_{n-1},a_n)$ is irreducible, by Corollary
\ref{greenwich}, the number of solutions of the second equation satisfying
$|a_{n-1}|, |a_n| \le H$ is $O_\epsilon(H^{\frac{1}{d}+\epsilon})$, where
$d$ is the degree of $f_{j,i}$. This is satisfactory for our purposes if $d \ge 2$.
If $d=1$, then $f_{j,i}$ is a linear polynomial, say
\[
  f_{j,i}(a_{n-1},a_n)=d_1^{(j,i)} a_{n-1} + d_2^{(j,i)} a_n + d_3^{(j,i)}
\]
for suitable $d_1^{(j,i)}, d_2^{(j,i)}, d_3^{(j,i)} \in \qu$. Since
$f_j$ does not vanish identically, also its divisor $f_{j,i}$ can't, so not
all of $d_1^{(j,i)}, d_2^{(j,i)}, d_3^{(j,i)}$ can be zero. If
$d_1^{(j,i)}=d_2^{(j,i)}=0$, then necessarily $d_3^{(j,i)} \ne 0$, and
(\ref{dilbert}) has no solution at all. Otherwise, we can invoke Lemma
\ref{contagion} to show that (\ref{dilbert}) has at most
$O_{n,\epsilon}\left(H^{1/2+\epsilon}\right)$ solutions. Hence, in any case,
(\ref{dilbert}) has at most $O_{n,\epsilon}\left(H^{1/2+\epsilon}\right)$
solutions,
and since $m_j$ is bounded in terms of $n$ and $\epsilon$ only, the same is true for
(\ref{ap}). Working backwards through our considerations above,
we find that (\ref{se}) is true with exponent $\frac{1}{2}
+\epsilon$ on the right
hand side, which is even better than claimed. This completes the proof of
\mbox{Theorem \ref{nachrichten}}.

\end{document}